\numberwithin{equation}{section}
\newtheorem{thm}{Theorem}[section]
\newtheorem{cor}[thm]{Corollary}
\newtheorem{lem}[thm]{Lemma}
\newtheorem{prop}[thm]{Proposition}
\newtheorem{defn}[thm]{Definition}
\theoremstyle{definition}
\newtheorem{rmk}[thm]{Remark}
\newcounter{alphabet}
\newcounter{tmp}
\newcommand{\bysame}{\leavevmode\hbox to3em{\hrulefill}\,}
\begin{document}
\baselineskip=21pt
\markboth{} {}

\bibliographystyle{amsplain}
\title[Hilbert space valued Gabor frames in Weighted Amalgam Spaces]
{Hilbert space valued Gabor frames in Weighted Amalgam Spaces}

\author{Anirudha poria}
\author{Jitendriya Swain}
\address{Department of Mathematics,
Indian Institute of Technology Guwahati,
Guwahati 781039, \;\; India.} 
\email{a.poria@iitg.ac.in, jitumath@iitg.ac.in}
\keywords{Gabor frames; superframes; amalgam spaces; Gabor expansions; sampling; time-frequency analysis; Wiener's Lemma; Walnut representation; Wexler-Raz biorthogonality.} \subjclass[2010]{Primary
 42C15; Secondary 42A65, 47B38, 42C20.}

\begin{abstract}
 Let $\mathbb{H}$ be a separable Hilbert space. In this paper we establish a generalization of Walnut's representation and Janssen's representation of the $\mathbb{H}-$valued Gabor frame operator on $\mathbb{H}-$valued weighted amalgam spaces $W_{\mathbb{H}}(L^p,L^q_v)$, $1 \leq p, q \leq \infty$. Also we show that the frame operator is invertible on $W_{\mathbb{H}}(L^p,L^q_v)$, $1 \leq p, q \leq \infty$, if the window function is in the Wiener amalgam space $W_{\mathbb{H}}(L^{\infty},L^1_w)$. Further, we obtain the Walnut representation and invertibility of the frame operator corresponding to Gabor superframes and multi-window Gabor frames on $W_{\mathbb{H}}(L^p,L^q_v)$, $1 \leq p, q \leq \infty,$ as a special case by choosing the appropriate Hilbert space $\mathbb{H}$.

\end{abstract}
\date{\today}
\maketitle
\def\BC{{\mathbb C}} \def\BQ{{\mathbb Q}}
\def\BR{{\mathbb R}} \def\BI{{\mathbb I}}
\def\BZ{{\mathbb Z}} \def\BD{{\mathbb D}}
\def\BP{{\mathbb P}} \def\BB{{\mathbb B}}
\def\BS{{\mathbb S}} \def\BH{{\mathbb H}}
\def\BE{{\mathbb E}}
\def\BN{{\mathbb N}}
\def\LP{{W(L^p(\BR^d, \BH), L^q_v)}}
\def\LPN{{W_{\BH}(L^p, L^q_v)}}
\def\LPQ{{W_{\BH}(L^{p'}, L^{q'}_{1/v})}}
\def\L1{{W_{\BH}(L^{\infty}, L^1_w)}}
\def\LB{{L^p(Q_{1/ \beta}, \BH)}}
\def\SP{S^{p,q}_{\tilde{v}}(\BH)}
\def\f{{\bf f}}
\def\h{{\bf h}}
\def\hp{{\bf h'}}
\def\m{{\bf m}}
\def\g{{\bf g}}
\def\ga{{\boldsymbol{\gamma}}}
\vspace{-.5cm}

\section{Introduction}
For $\alpha,\beta>0$, $g \in L^2(\BR^d)$ and $n,k \in \BZ^d$ define $M_{\beta n} g(x):= e^{2 \pi i \langle \beta n , x \rangle} g(x)$ and $T_{\alpha k}g(x):=g(x-\alpha k)$. The collection of functions $\mathcal{G}(g, \alpha,\beta)=\{ M_{\beta n}T_{\alpha k}g : \; k,n \in \BZ^d\}$ in $L^2(\BR^d)$ is called a \textit{Gabor frame} or a \textit{Weyl-Heisenberg frame} if there exist constants $A, B >0$ such that
\begin{equation}
A \Vert f \Vert^2_2 \leq \sum_{k,n \in \BZ^d} |\langle f, M_{\beta n}T_{\alpha k}g \rangle |^2 \leq B \Vert f \Vert^2_2, \;\; \forall f \in L^2(\BR^d).
\end{equation}
The associated frame operator called the Gabor frame operator has the form 
\begin{equation}\label{eq01}
S_{g}f:=\sum\limits_{k,n \in \BZ^d} \langle f, M_{\beta n}T_{\alpha k}g \rangle M_{\beta n}T_{\alpha k}g, \;\; f \in L^2(\BR^d).
\end{equation}

If $g \in L^2(\BR^d)$ generates a Gabor frame $\mathcal{G}(g, \alpha,\beta)$ then there exists a dual window (called the canonical dual window) $\gamma=S_{g}^{-1}(g) \in L^2(\BR^d)$ such that $\mathcal{G}(\gamma, \alpha,\beta)=\{M_{\beta n}T_{\alpha k} \gamma: k,n \in \BZ^d \}$ is also a frame for $L^2(\BR^d)$, called the canonical dual Gabor frame. Consequently every $f \in L^2(\BR^d)$ possess the expansion
\begin{equation}\label{eq35}
f=\sum_{k,n \in \BZ^d}\langle f, M_{\beta n}T_{\alpha k}g \rangle M_{\beta n}T_{\alpha k}\gamma  = \sum_{k,n \in \BZ^d}\langle f, M_{\beta n}T_{\alpha k}\gamma \rangle M_{\beta n}T_{\alpha k}g
\end{equation}
 with unconditional convergence in $L^2(\BR^d)$.

 The convergence of the above expansion can be extended to $L^p$ spaces under additional assumptions on $g$ and $\gamma$ (see \cite{gro, hei}).
If $g$, $\gamma$ are in Feichtinger's algebra then (\ref{eq35}) holds for modulation spaces (see \cite{zim, gro}), if $g$, $\gamma$ are in the Wiener algebra $W(L^{\infty}, L^1)(\BR^d)$ then it holds for $L^p(\BR^d)$ and Wiener amalgam spaces (see \cite{fei, hei, oko}), and if $g, \gamma \in W(C_r,L^1)(\BR^d)$ then it holds for local Hardy spaces $h_p(\BR^d)$, $p>d/(d+r)$ (see \cite{lak, wei}), where $C_r(\BR^d)$ denotes the Lipschitz or H\"older space with $0 < r \leq 1$.
Moreover, it is desirable to find a Gabor frame such that the generator $g$ and its canonical dual $\gamma$ have the similar properties (viz. smoothness or decay) in applications (see \cite{str}). In this direction the following results are known: If $g$ has compact support then in general $\gamma$ is no longer compactly supported but has exponential decay \cite{bol} and in \cite{pre} Del Prete proved that if $g$ has exponential decay, then $\gamma$ also has exponential decay. If $g$ can be estimated by $C(1+|t|)^{-s}$, then the same holds for $\gamma$ (see \cite{str1}). It is natural to ask  if $g$ is in a given function space whether its canonical dual is in the same space? The first result in this direction is due to Janssen \cite{jan} which ensures that if $g$ is in the Schwartz space on $\BR^d$ then its canonical dual is in the same space. Gr\"ochenig and Leinert \cite{g} proved that if $g$ is an element in the Feichtinger's  algebra and $\mathcal{G}(g, \alpha,\beta)$ is a Gabor frame for $L^2(\mathbb{R}^d)$,  then  the canonical dual $\gamma$ is in the same space. In \cite{kri}, the authors prove that if $\mathcal{G}(g, \alpha,\beta)$ is a Gabor frame generated by $g\in W(L^{\infty}, L^1_v)$ then the Gabor frame operator is invertible on the Wiener amalgam spaces $W(L^\infty,L^1_v)$, where $v$ is an admissible weight function. In \cite{weis}, Weisz extended the analogous result on amalgam spaces $W(L^\infty,L^q_v)$ for $1\leq q \leq 2.$ The above results were based on the reformulation of a non-commutative Wiener's lemma proved by Baskakov \cite{bas1,bas2}. In \cite{bal3}, Balan et al. obtained a similar result for multi-window Gabor frames on Wiener amalgam spaces using a recent Wiener type result on non-commutative almost periodic Fourier series \cite{bal4}.
The vector-valued Gabor frames or superframes  were introduced by Balan \cite{bal1} in the context of ``multiplexing",  and several well-known results for
Gabor frames are extended to superframes in \cite{bal2,han}.
  The superwavelet and Gabor frames in $L^2(\mathbb{R}^d,\mathbb{C}^n)$ is widely applicable in mathematics and several branches of engineering (see \cite{bal1,bal2,dut1,dut2,lyu,gu,han,li,zib1,zib2}). Using the growth estimates for the Weierstrass $\sigma$-function and a new type of interpolation problem for entire functions on Bargmann-Fock space, Gr\"ochenig and Lyubarskii \cite{lyu} obtained a complete characterization of all lattices $\Lambda\subset \mathbb{R}^2$ such that the Gabor system with first $n+1$ Hermite functions (generators) forms a frame for $ L^2(\BR, \BC^n)$.

The main objective of this article is to investigate the following for superframes on vector-valued amalgam spaces: 
  \begin{enumerate}
  \item Walnut's representation for the superframe operator on vector valued amalgam spaces.
  \item Convergence of Gabor expansions on vector valued amalgam spaces.
    \item Invertibility of the superframe operator on vector valued amalgam spaces.
      \end{enumerate}

 We consider the separable Hilbert space valued Gabor frames and perform Gabor analysis on $W_{\mathbb{H}}(L^p,L^q_v), 1\leq p,q\leq\infty$ to obtain (1), (2) and (3) for Gabor superframes and multi-window Gabor frames as a special case by choosing the appropriate separable Hilbert space $\mathbb{H}$ (see Remark \ref{r} and Remark \ref{r1}).
We start with the definition of $\mathbb{H}-$valued Gabor frames on $L^2(\mathbb{R}^d,\mathbb{H})$.

\begin{defn}\label{frame}
 Let $\alpha, \beta >0$ and $\g \in L^2(\BR^d,\BH)$ be given. For $n,k\in\BZ^d$, define $M_{\beta n} \g(x)= e^{2 \pi i \langle \beta n , x \rangle} \g(x)$ and $T_{\alpha k}\g(x)=\g(x-\alpha k)$. The $\BH$-valued Gabor system $\mathcal{G}(\g,\alpha, \beta )$ is a frame for $L^2(\BR^d, \BH)$ if there exist constants $A,B > 0$ such that for all $\f \in L^2(\BR^d, \BH)$,
\begin{equation}
A \Vert \f \Vert^2_{L^2(\BR^d, \BH)} \leq \sum_{k,n \in \BZ^d} |\langle \f, M_{\beta n}T_{\alpha k}\g \rangle_{L^2(\BR^d, \BH)} |^2 \leq B \Vert \f \Vert^2_{L^2(\BR^d, \BH)}.
\end{equation}\end{defn}
 For $\g,\ga\in L^2(\mathbb{R}^d,\BH)$, the associated frame operator is given by $$S_{\g,\ga}\f=\sum\limits_{k,n \in \BZ^d} \langle \f, M_{\beta n}T_{\alpha k}\g \rangle M_{\beta n}T_{\alpha k}\ga, \;\; \f \in L^2(\BR^d,\mathbb{H}).$$
  Moreover if $\g,\ga\in W_{\mathbb{H}}(L^\infty,L^1_v), 1\leq p,q\leq\infty,$ then we obtain Walnut's representation of the frame operator on $\BH-$valued amalgam spaces (see Theorem \ref{th1}):
\begin{equation}\label{eq00}
 S_{\g,\ga}\f(x)=\beta^{-d} \sum_{n \in \BZ^d} G_n(x)\left( T_{\frac{n}{\beta}} \f(x) \right),
 \end{equation}
where $G_n(x)$ is an element of $B(\BH)$, the class of all bounded linear operators on $\BH$. Since we deal with vector-valued functions, obtaining the above expression is bit technical. The expression in Walnut's representation of the frame operator in (\ref{eq00}) is similar to scalar-valued case (see \cite{oko}), but has a different meaning.

Further, we show that if the window function $\g\in W_\BH(C_0,L^1_w)$, the subspace formed by the functions of $\BH-$valued Wiener space that are continuous, then the canonical dual is in the same space. To obtain this result we show that the frame operator $S_\g ~(=S_{\g,\g})$ is invertible on $\LPN, 1 \leq p,q \leq \infty$ and $S_{\g}^{-1}: W(L^{p}, L^{q}_{v}) \rightarrow W(L^{p}, L^{q}_{v})$ is continuous both in $\sigma(W(L^{p}, L^{q}_{v}), W(L^{p'}, L^{q'}_{1/v}))$ and the norm topologies. Such invertibility results is addressed for modulation spaces (see \cite{gro2, g}) and for $L^p$ spaces (see \cite{kri}) by using Wiener's $1/f$ lemma (see \cite{bal4, bas1, bas2}). We construct a Banach algebra of operators admitting an expansion like (\ref{eq00}) and use a version of Wiener's $1/f$ lemma (see \cite{bal4}) to prove that the algebra is spectral within the class of bounded linear operators on $L^p(\BR^d,\BH),\; 1 \leq p \leq \infty$.

This paper is organized as follows. In Section 2, we provide basic background and define $\BH-$valued amalgam spaces. In Section 3, we derive Walnut's representation for the $\BH$-valued Gabor frame operator and discuss the convergence of $\BH$-valued Gabor expansions. In Section 4, we prove the spectral invariance theorem for a subalgebra of weighted-shift operators in $B(L^p(\BR^d, \BH)),$ and finally in section 5 we prove the invertibility of the frame operator $S_\g$ on $\LPN, 1 \leq p,q \leq \infty$.

\section{Notations and Background}
Throughout this paper, let $\BH$ denote a separable complex Hilbert space. Let $Q_{\alpha}$ denote the cube $Q_{\alpha}=[0,\alpha)^d$ and $\chi_E$ is the characteristic function of a measurable set $E$.

\begin{defn}
For $1 \leq p \leq \infty$ and a strictly positive function $w$ on $\BR^d$, let $L^p_w(\BR^d, \BH)$ denote the space of all equivalence classes of $\BH$-valued Bochner integrable functions $\f$ defined on $\BR^d$ with $\displaystyle\int_{\BR^d} \Vert \f(x) \Vert^p_{\BH}\, w(x)^p \,dx < \infty$, with the usual adjustment if $p=\infty.$
\end{defn}
\begin{defn}
The Fourier transform of $\f \in L^1(\BR^d, \BH)$ is
 \[\hat{\f}(w)=\mathcal{F}\f(w)= \int_{\BR^d} \f(t) \,e^{-2 \pi i \langle w,t \rangle} \,dt, \;\;\; w \in \BR^d. \]
\end{defn}

Let $\f \in L^{p'}(\BR^d,\BH)$. Define $\Lambda_{\f}:L^{p}(\BR^d,\BH) \rightarrow \BC$ by $\Lambda_{\f}(\g)=\displaystyle\int_{\BR^d}\langle \f(x), \g(x)\rangle_{\BH}\, dx.$ Then the map $\f \mapsto \Lambda_{\f}$ defines an isometric isomorphism of $L^{p'}(\BR^d,\BH)$ onto $[L^{p}(\BR^d,\BH)]^*$. For a more detailed study of vector valued functions we refer to Diestel and Uhl \cite{die}.
\begin{defn}
For $\f,\g \in L^2(\BR^d,\BH)$ the inner product on $L^2(\BR^d,\BH)$ is defined by \[\langle \f, \g \rangle_{L^2(\BR^d,\BH)}=\int_{\BR^d}\langle \f(x), \g(x)\rangle_{\BH}\, dx. \]
\end{defn}
\begin{defn}\label{dot}
If $x,y \in \BH$, the operator $x \odot y :\BH \rightarrow \BH$ defined by \[ (x \odot y)(z)=\langle z,y \rangle x ,\;\; z \in \BH. \]
\end{defn}
Clearly for  non-zero $x$ and $y$, $x \odot y$ is a rank one operator with  $\Vert x \odot y \Vert = \Vert x \Vert \Vert y \Vert$.

%
%
\subsection{Gabor frames in $L^2(\BR^d,\BH)$}
Assume that the Gabor system $\mathcal{G}(\g,\alpha,\beta)$ is a frame for $L^2(\BR^d,\BH)$ with frame bounds $A,B$ and let $\h$ be a vector in $\BH$ with unit norm. Then the \emph{analysis ~operator} is a bounded mapping $C_{\g,\h}:L^2(\BR^d,\BH) \rightarrow \ell^2(\BZ^{2d})$ defined by $C_{\g,\h}\f=(\langle \f,M_{\beta n}T_{\alpha k}\g \rangle \h)_{k,n \in \BZ^{d}} $. The adjoint operator of the analysis operator, called the \emph{synthesis~ operator} is a bounded mapping $ R_{\g,\h} :\ell^2(\BZ^{2d},\BH) \rightarrow  L^2(\BR^d,\BH)$ defined by $R_{\g,\h}d = \sum\limits_{k,n \in \BZ^d} \langle d_{kn}, \h \rangle_{\BH}\, M_{\beta n}T_{\alpha k}\g$. The series defining $R_{\g,\h}d$ converges unconditionally in $L^2$ for every $d \in \ell^2.$ The composition operator of analysis and synthesis operator is called the \emph{frame operator} $S_{\g}=R_{\g,\h} C_{\g,\h}:L^2(\BR^d,\BH) \rightarrow L^2(\BR^d,\BH)$, defined by $$S_{\g}\f=R_{\g,\h} C_{\g,\h}\f=\sum_{k,n \in \BZ^d}\langle \f,M_{\beta n}T_{\alpha k} \g \rangle M_{\beta n}T_{\alpha k} \g.$$ The frame operator is strictly positive, invertible, self-adjoint satisfying $AI_{L^2(\BR^d,\BH)} \leq S_{\g} \leq BI_{L^2(\BR^d,\BH)}$ and $B^{-1}I_{L^2(\BR^d,\BH)} \leq S^{-1}_{\g} \leq A^{-1}I_{L^2(\BR^d,\BH)}$ .
For $\ga=S_{\g}^{-1}\g\in L^2(\BR^d,\BH)$, the Gabor expansions
\begin{equation}\label{*}
R_{\ga,\h} C_{\g,\h}\f=\sum_{k,n \in \BZ^d}\langle \f,M_{\beta n}T_{\alpha k} \ga \rangle M_{\beta n}T_{\alpha k} \g,
\end{equation}
converge to $f$ unconditionally in $L^2(\BR^d,\BH)$.
\subsection{Weight functions}
Weight functions play an important role in time frequency analysis and occur in many problems and contexts. For our study we need the following types of weights.

A function $ w :\BR^d \rightarrow (0, + \infty)$ is called a weight if it is continuous and symmetric (i.e. $w(x)=w(-x))$. A weight $w$ is submultiplicative if $ w(x+y) \leq w(x)w(y), \;\; x,y \in \BR^d.$ Given a submultiplicative weight $w$, a second weight $ v :\BR^d \rightarrow (0, + \infty)$ is called $w$-moderate if there exists a constant $C_v > 0$ such that
\begin{equation} \label{eqw}
v(x+y) \leq C_v w(x)v(y), \;\; x,y \in \BR^d.
\end{equation}
A weight $w$ is called admissible if it is submultiplicative, $w(0)=1$ and $w$ satisfies the Gelfand-Raikov-Shilov condition namely $\lim_{k \rightarrow \infty} w(kx)^{1/k}=1, \; x \in \BR^d.$

By (\ref{eqw}) and by symmetry of $w$ one can conclude that the class of $w$-moderate weights is closed under reciprocals, and consequently the class of spaces $L^p_v$ using $w$-moderate weights is closed under duality (with the usual exception for $p=\infty$). Also  $L^p_v(\BR^d, \BH)$ is translation-invariant for moderate weights like  $L^p_v(\BR^d)$ (see Proposition 11.2.4. of \cite{hei1}).
%
Throughout this paper, $w$ will denote a submultiplicative weight function and $v$ will denote an $w$-moderate function. Given an $w$-moderate weight $v$ on $\BR^d$, we will often use the notation $\tilde{v}$ to denote the weight on $\BZ^d$ defined by $\tilde{v}(k)=v(\alpha k)$, and for a weight $v$ on $\BR^{2d}$ we define $\tilde{v}(k,n) = v(\alpha k, \beta n).$
\subsection{Amalgam spaces}
\begin{defn}
Given an $w$-moderate weight $v$ on $\BR^d$ and given $1 \leq p, q \leq \infty $, the weighted amalgam space $\LP$ is the Banach space of all $\BH$-valued measurable functions $\f : \BR^d \rightarrow \BH$ for which the norm
\begin{equation} \label{eqa}
\Vert \f \Vert_{\LP} := \left(\sum_{k \in \BZ^d} \Vert \f \cdot T_{\alpha k} \chi_{Q_{\alpha}} \Vert_{L^p(\BR^d, \BH)}^q {v(\alpha k)}^q \right)^{1/q}  < \infty,
\end{equation}
with obvious modification for $q= \infty .$
\end{defn}
Throughout this paper we denote the weighted amalgam space as $W_{\BH}(L^p, L^q_v)$ instead of $W(L^p(\BR^d, \BH), L^q_v)$. We refer to \cite{fei1, fei2, fei3, fou, hei1} for discussions of weighted amalgam spaces and their applications. The space $\LPN$ is independent of the value of $\alpha$ used in (\ref{eqa}) in the sense that each different choice of $\alpha$ yields an equivalent norm for $\LPN$ (as in the scalar valued case).

Notice that the space $\LPN$ is a Banach function space in the sense of \cite{ben}. The  K\"othe dual or associated space (see \cite{ben}) of $\LPN$ is the space of all measurable functions
$\g: \BR^d \rightarrow \BH $ such that $\langle \f(x), \g(x) \rangle_{\BH}\in L^1(\BR^d),$ for each $\f\in\LPN$. By Theorem 2.9 of \cite{ben}, the  K\"othe dual of $\LPN$ coincides with $W_{\BH}(L^{p'}, L^{q'}_{1/v}),$ where $1/p+1/p'=1/q+1/q'=1$ for all $1 \leq p, q \leq \infty $. A series $\sum_{k\in J}\f_k$ converges (unconditionally) in the topology of $\sigma(\LPN, W_{\BH}(L^{p'}, L^{q'}_{1/v})$ if the series $\sum_{k\in J}\langle\f_k,\g\rangle$ converges (is independent of the ordering of $J$) for each $\g\in W_{\BH}(L^{p'}, L^{q'}_{1/v}).$

Let $\f \in W_{\BH}(L^{p'}, L^{q'}_{1/v})$. Define $\Theta_{\f}:\LPN \rightarrow \BC$ by $\Theta_{\f}(\g)=\langle \f, \g \rangle= \int_{\BR^d} \langle \f(x), \g(x) \rangle_{\BH} dx.$ Then the map $\f \mapsto \Theta_{\f}$ defines an isometric isomorphism of $\LPN^*$ onto $W_{\BH}(L^{p'}, L^{q'}_{1/v})$. We refer to Theorem 11.7.1 of \cite{hei1} for the duality result in scalar valued amalgam spaces. We summarize the above discussions in the following lemma.
\begin{lem}
Let $v$ be an $w$-moderate weight and $1/p+1/p'=1/q+1/q'=1$. For $1 \leq p, q < \infty,$ the dual space of $\LPN$ is $ W_{\BH}(L^{p'}, L^{q'}_{1/v})$ and the K\"othe dual of $\LPN$ is $ W_{\BH}(L^{p'}, L^{q'}_{1/v}).$
\end{lem}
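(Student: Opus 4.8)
The plan is to establish both the dual and Köthe dual of $\LPN$ by reducing everything to the scalar-valued amalgam space theory combined with the vector-valued duality $[L^p(\BR^d,\BH)]^* \cong L^{p'}(\BR^d,\BH)$ already recorded in the excerpt. The key structural observation is that the amalgam norm (\ref{eqa}) is an iterated norm: a local $L^p(\BR^d,\BH)$ norm on each translated cube $T_{\alpha k}Q_\alpha$, followed by a weighted $\ell^q_{\tilde v}(\BZ^d)$ norm over the lattice. So I would treat $\LPN$ as a weighted $\ell^q_{\tilde v}$-sum of the local Banach spaces $X_k := L^p(T_{\alpha k}Q_\alpha,\BH)$, and exploit that the dual of a weighted $\ell^q_{\tilde v}$-direct sum of Banach spaces is the weighted $\ell^{q'}_{1/\tilde v}$-direct sum of their duals, provided the index $q$ and the component spaces are well-behaved.

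First I would fix the pairing $\langle \f,\g\rangle = \int_{\BR^d}\langle \f(x),\g(x)\rangle_{\BH}\,dx$ and verify it is well-defined and bounded on $\LPN \times W_{\BH}(L^{p'},L^{q'}_{1/v})$. This is a two-step Hölder estimate: on each cube apply the local Hölder inequality together with the vector-valued Cauchy--Schwarz $|\langle \f(x),\g(x)\rangle_{\BH}| \leq \Vert\f(x)\Vert_{\BH}\Vert\g(x)\Vert_{\BH}$, yielding a bound by $\Vert \f\,T_{\alpha k}\chi_{Q_\alpha}\Vert_{L^p}\,\Vert \g\,T_{\alpha k}\chi_{Q_\alpha}\Vert_{L^{p'}}$ on the $k$-th cube, then apply the discrete weighted Hölder inequality with exponents $q,q'$ and the pairing of weights $v(\alpha k)\cdot(1/v(\alpha k))=1$. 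This simultaneously proves boundedness of the pairing and shows that every element of $W_{\BH}(L^{p'},L^{q'}_{1/v})$ induces a bounded functional, giving one inclusion for both parts (a) and (b).

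For the reverse inclusion in part (a), I would take an arbitrary $\Phi \in (\LPN)^*$ with $1\le p,q<\infty$ and reconstruct a representing function. Restricting $\Phi$ to functions supported in a single cube $T_{\alpha k}Q_\alpha$ gives a bounded functional on $L^p(T_{\alpha k}Q_\alpha,\BH)$, which by the vector-valued Riesz representation (the isometric isomorphism $[L^p(\BR^d,\BH)]^*\cong L^{p'}(\BR^d,\BH)$ cited in the excerpt, restricted to the cube) is represented by some $\g_k \in L^{p'}(T_{\alpha k}Q_\alpha,\BH)$. Patching the $\g_k$ together defines a measurable $\g:\BR^d\to\BH$; the essential point is to show $\g \in W_{\BH}(L^{p'},L^{q'}_{1/v})$, i.e. that the sequence of local norms lies in $\ell^{q'}_{1/\tilde v}$. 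This follows from the duality $(\ell^q_{\tilde v})^* \cong \ell^{q'}_{1/\tilde v}$ for $q<\infty$ by testing $\Phi$ against functions whose cube-norms realize the $\ell^q_{\tilde v}$-duality and tracking that $\Vert\Phi\Vert$ controls the $\ell^{q'}_{1/\tilde v}$-norm of the local-norm sequence. Part (b) is handled analogously but more directly: the Köthe dual condition $\g\cdot\LPN\subseteq L^1(\BR^d,\BH)$ is an integrability statement that does not require $q<\infty$, so one characterizes membership by testing against cube-supported and lattice-supported functions, recovering the $W_{\BH}(L^{p'},L^{q'}_{1/v})$ norm via the unweighted-to-weighted $\ell^q$--$\ell^{q'}$ duality without the Radon--Nikodym obstruction that forces $q<\infty$ in part (a).

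I expect the main obstacle to be the patching-and-summability step in part (a), namely upgrading the pointwise-defined local representatives $\g_k$ to a genuine element of $W_{\BH}(L^{p'},L^{q'}_{1/v})$ with norm control. The scalar-valued proof of this for amalgam spaces is standard, and the only genuinely new ingredient here is that $\BH$ is a separable Hilbert space, hence reflexive and possessing the Radon--Nikodym property, so the vector-valued Riesz representation holds on each cube with no pathology; this is exactly why separability of $\BH$ is assumed. Once that local duality is in hand, the outer $\ell^q_{\tilde v}$--$\ell^{q'}_{1/\tilde v}$ duality argument transfers verbatim from the scalar case. For this reason I would streamline the write-up by stating that the argument parallels the scalar-valued amalgam duality (as the excerpt already signals with ``with necessary modifications''), isolating only the vector-valued Riesz step as the substantive modification.
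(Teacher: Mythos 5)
Your proposal is correct and follows essentially the same route the paper intends: the paper gives no detailed argument, asserting the lemma holds ``as in the scalar valued case with necessary modifications'' (citing Bennett--Sharpley for the K\"othe dual), and your sketch is precisely that scalar amalgam duality argument, with the vector-valued Riesz representation $[L^p(\BR^d,\BH)]^*\cong L^{p'}(\BR^d,\BH)$ on each cube (valid since the separable Hilbert space $\BH$ has the Radon--Nikod\'ym property) correctly isolated as the only substantive modification.
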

\section{$\BH$-valued Gabor Expansions in weighted amalgam spaces}
The theory of Gabor expansions on Wiener amalgam spaces has been discussed in \cite{fei, gra, hei, oko} for a separable lattice $\Lambda=\alpha \BZ^d \times \beta \BZ^d, \; \alpha, \beta >0$. In this section we generalize and extend the Walnut's representation of the $\BH$-valued frame operator and show the convergence of $\BH$-valued Gabor expansions on $\LPN$.
\begin{defn}
The Fourier transform of $\f \in L^1(Q_{1/ \beta}, \BH)$ is the sequence $\hat{\f}$ defined by \[ \hat{\f}(n)= \mathcal{F}\f(n)=\beta^d \int_{Q_{1/ \beta}}e^{-2 \pi i \beta \langle n,t \rangle} \f(t)\,  dt, \;\;\;\; n \in \BZ^d . \]
\end{defn}
For $1 \leq p,q \leq \infty $, denote $\mathcal{F}L^p(Q_{1/ \beta}, \BH)$ by the image of $L^p(Q_{1/ \beta}, \BH)$ under the Fourier transform. From the uniqueness of Fourier coefficients for functions in $L^p(Q_{1/ \beta}, \BH)$, there exists a unique function $\m \in L^p(Q_{1/ \beta}, \BH) $ such that $\hat{\m}(n)=d_n$ for every $n$, if $d=(d_n)_{n \in \BZ^d} \in \mathcal{F}L^p(Q_{1/ \beta}, \BH).$  The norm on $\mathcal{F}L^p(Q_{1/ \beta}, \BH)$ is defined by
$\Vert d \Vert_{\mathcal{F}L^p(Q_{1/ \beta}, \BH)}=\Vert \m \Vert_{p,Q_{1/ \beta}}.$
\begin{defn}\label{s}
Let $\alpha, \beta > 0$ be given. Let $$\SP= \{d=(d_{kn})_{k,n \in \BZ^d}: ~\mbox{for~ each}~ k \in \BZ^d,~\exists~ \m_k \in L^p(Q_{1/ \beta}, \BH)  ~\mbox{such that}~ \hat{\m_k}(n)=d_{kn} \},$$ and $\Vert d \Vert_{\SP}=\left( \sum_{k \in \BZ^d} \Vert \m_k \Vert^q_{p,Q_{1/ \beta}} \tilde{v}(k)^q \right)^{1/q} \;\; < \infty,$ with the usual change if $q=\infty$.
\end{defn}
Since $\BH$ is separable, $\BH$ isometrically isomorphic to $\ell^2.$ Therefore $\|\f\|_{L^p(Q_{1/ \beta}, \ell^2(\BZ))}\asymp\|\f\|_{L^p(Q_{1/ \beta}, \BH)}$.
When $1<p< \infty,$ we can write $\m_k$ as a Fourier series
\begin{equation} \label{ew4}
\m_k(x)=\sum_{n \in \BZ^d} d_{kn} e^{2 \pi i \beta \langle n,x \rangle},
\end{equation}
in the sense that the square partial sums of (\ref{ew4}) converge to $\m_k$ in the norm of $L^p(Q_{1/ \beta}, \BH)$, cf. \cite{ru} (\cite{kat}, \cite{zyg} for scalar valued case). Hence, for $1<p< \infty$ and $1\leq q < \infty $ we can write the norm on $\SP$ as
 $\Vert d \Vert_{\SP}= \left( \sum_{k \in \BZ^d}\left( \int_{Q_{1/ \beta}} \Vert \sum_{n \in \BZ^d} d_{kn} e^{2 \pi i \beta \langle n, x \rangle } \Vert^p_{\BH}  dx \right)^{q/p} \tilde{v}(k)^q \right)^{1/q}.$

The analysis and synthesis operators associated with the $\BH$-valued Gabor frame are defined as follows: Take $\h \in \BH$ such that $\Vert \h \Vert_{\BH}=1$. Let $\alpha, \beta >0$, $1 \leq p , q \leq \infty$ and fix $\g,\ga \in \L1$. For $\f \in \LPN$, define the analysis operator by
\begin{eqnarray*}
C_{\g, \h}f(k,n) &=& \langle \f,M_{\beta n}T_{\alpha k}\g \rangle \h \\
&=& \int_{\BR^d} \langle \f(x),M_{\beta n}T_{\alpha k}\g(x) \rangle_{\BH} \h dx \\
&=& \int_{\BR^d} \langle \f(x),T_{\alpha k}\g(x) \rangle_{\BH} \h e^{-2 \pi i \beta \langle n,x \rangle} dx \\
&=& \mathcal{F}((\h \odot T_{\alpha k}\g)\f)(\beta n),
\end{eqnarray*}where $\odot$ is defined in definition \ref{dot}.
So $\sum_{n \in \BZ^d} \langle \f,M_{\beta n}T_{\alpha k}\g \rangle \h e^{2 \pi i \beta \langle n, x\rangle} = \sum_{n \in \BZ^d} \mathcal{F}((\h \odot T_{\alpha k}\g)\f)(\beta n) e^{2 \pi i \beta \langle n, x\rangle}.$ Applying Poisson summation formula,
$
\sum_{n \in \BZ^d} \langle \f,M_{\beta n}T_{\alpha k}\g \rangle \h e^{2 \pi i \beta \langle n, x\rangle} = \beta^{-d} \sum_{n \in \BZ^d}((\h \odot T_{\alpha k}\g(x))\f)(x-\frac{n}{\beta})
= \m_k(x)~(say).
$

For $d=(d_{kn}) \in \SP$, define the synthesis operator by
\begin{eqnarray}\label{eqr}
R_{\g,\h}d(x) &=& \sum_{k,n \in \BZ^d} \langle d_{kn}, \h \rangle_{\BH}\,M_{\beta n}T_{\alpha k}\g(x) \nonumber \\
&=& \sum_{k \in \BZ^d} \left\langle \sum_{n \in \BZ^d}d_{kn} e^{2 \pi i \beta \langle n,x \rangle}, \h \right\rangle_{\BH}\, T_{\alpha k}\g(x)\\
&=& \sum_{k \in \BZ^d} \langle \m_k(x), \h \rangle_{\BH}\, T_{\alpha k}\g(x) \nonumber
\end{eqnarray}
where $\m_k(x)=\sum\limits_{n \in \BZ^d}d_{kn} e^{2 \pi i \beta \langle n,x \rangle}$ is $1/\beta$-periodic.

From the above observations we obtain the analogue of Walnut's representation for the $\BH$-valued Gabor frames in the following theorem.
\begin{thm}\label{th1}
Let $\alpha,\beta>0$ and $1 \leq p,q \leq \infty.$ Let $v$ be an $w$-moderate weight on $\BR^d$ and $\g, \ga \in \L1$ with $\h \in \BH$ be given. If $ \| \h \|_{\BH}=1$ then
\begin{enumerate}\item[(a)] The analysis operator $C_{\g,\h}: \LPN\rightarrow \SP $ defined by $C_{\g,\h}\f=(\langle \f,M_{\beta n}T_{\alpha k}\g \rangle \h)_{k,n \in \BZ^d}$ is a bounded. There exist unique functions $\m_k \in \LB$ satisfying $\hat{\m}_k(n)=C_{\g,\h}\f(k,n)$ for all $k,n \in \BZ^d$ which can be expressed explicitly by
\begin{equation} \label{eq1}
\m_k(x)=\beta^{-d} \sum_{n \in \BZ^d} (\h \odot T_{\alpha k}\g(x))\f(x-\frac{n}{\beta})=\beta^{-d} \sum_{n \in \BZ^d} (\h \odot T_{\alpha k +\frac{n}{\beta} }\g(x))T_{\frac{n}{\beta}}\f(x)
\end{equation}
The series (\ref{eq1}) converges unconditionally in $\LB$ $($unconditionally in the topology of $\sigma(L^{\infty}(Q_{1/ \beta}, \BH),L^1(Q_{1/ \beta}, \BH))$ if $p=\infty ).$
\item[(b)] The analysis operator $ R_{\g,\h} : \SP \rightarrow \LPN $ defined by \begin{equation} \label{eq2}
R_{\g,\h}d=\sum_{k \in \BZ^d} \langle \m_k(\cdot),\h \rangle_{\BH} T_{ak}\g
\end{equation} is bounded, where $d \in \SP$, $\m_k \in L^p(Q_{\alpha} , \BH)$ be the unique functions satisfying $\hat{\m}_k(n)=d_{kn}$ for all $k,n \in \BZ^d$. The series (\ref{eq2}) converges unconditionally in $\LPN$ $($unconditionally in the $\sigma(\LPN,\LPQ)$ topology if $p=\infty$ or $q=\infty).$
%
%
\item[(c)] The frame operator $R_{\ga,\h}C_{\g,\h}$ admits Walnut's representation on $\LPN$:
\begin{equation} \label{eq3}
R_{\ga,\h}C_{\g,\h}\f=\beta^{-d} \sum_{n \in \BZ^d} G_n\left( T_{\frac{n}{\beta}} \f \right)
\end{equation}
holds for $\f \in \LPN$, with the series (\ref{eq3}) converging absolutely in $\LPN$, where
\begin{equation}\label{eqg}
G_n(x)=\sum_{k \in \BZ^d} T_{\alpha k} \ga(x) \odot T_{\alpha k + \frac{n}{\beta}} \g(x) \in B(\BH).
\end{equation}
\end{enumerate}
\end{thm}
We need the following lemma to prove Theorem \ref{th1}.
 \begin{lem} \label{lem4}
Let $w$ be a submultiplicative weight, and let $\alpha, \beta > 0$ be given. Then there exists a constant $C=C(\alpha, \beta, w)>0$ such that if $\g, \ga \in \L1 $ and the functions $G_n$ are defined by (\ref{eqg}), then $ \sum\limits_{n \in \BZ^d} \Vert G_n \Vert_{L^{\infty}(\BR^d, B(\BH))} w\left(\frac{n}{\beta}\right) \leq C \; \Vert \g \Vert_{\L1}  \Vert \ga \Vert_{\L1}. $
\end{lem}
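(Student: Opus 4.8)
The plan is to reduce the $B(\BH)$-valued estimate to the corresponding scalar amalgam inequality, which is exactly Lemma 5.2 of \cite{oko}; the only structural input beyond the scalar case is the rank-one norm identity $\Vert x \odot y\Vert_{B(\BH)}=\Vert x\Vert_{\BH}\Vert y\Vert_{\BH}$. Applying the triangle inequality in $B(\BH)$ to the defining sum \eqref{eqg} gives, for a.e. $x$,
\[
\Vert G_n(x)\Vert_{B(\BH)}\le \sum_{k\in\BZ^d}\Vert T_{\alpha k}\ga(x)\Vert_{\BH}\,\Vert T_{\alpha k+\frac{n}{\beta}}\g(x)\Vert_{\BH}=\sum_{k\in\BZ^d}a(x-\alpha k)\,b\!\left(x-\alpha k-\tfrac{n}{\beta}\right)=:H_n(x),
\]
where $a(\cdot):=\Vert\ga(\cdot)\Vert_{\BH}$ and $b(\cdot):=\Vert\g(\cdot)\Vert_{\BH}$ are nonnegative scalar functions. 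Since $\Vert\ga\cdot T_{\alpha k}\chi_{Q_\alpha}\Vert_{L^\infty(\BR^d,\BH)}=\Vert a\cdot T_{\alpha k}\chi_{Q_\alpha}\Vert_{L^\infty(\BR^d)}$, these scalars lie in the scalar amalgam space $W(L^{\infty},L^1_w)$ with $\Vert a\Vert_{W(L^{\infty},L^1_w)}=\Vert\ga\Vert_{\L1}$ and $\Vert b\Vert_{W(L^{\infty},L^1_w)}=\Vert\g\Vert_{\L1}$. As $\Vert G_n\Vert_{L^\infty(\BR^d,B(\BH))}\le\Vert H_n\Vert_{L^\infty(\BR^d)}$, it suffices to prove the scalar bound $\sum_n\Vert H_n\Vert_{L^\infty}\,w(n/\beta)\le C\,\Vert a\Vert_{W(L^{\infty},L^1_w)}\Vert b\Vert_{W(L^{\infty},L^1_w)}$.

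First I would estimate $\Vert H_n\Vert_\infty$ through local sup norms $\tilde a(j):=\Vert a\cdot T_{\alpha j}\chi_{Q_\alpha}\Vert_{L^\infty}$ and $\tilde b(j)$, so that $\Vert a\Vert_{W(L^{\infty},L^1_w)}=\sum_j\tilde a(j)\,w(\alpha j)$. Fix $x$ and let $k_0=k_0(x)$ be the unique index with $x-\alpha k_0\in Q_\alpha$; then $a(x-\alpha k)\le\tilde a(k_0-k)$. To handle the off-grid shift I would decompose $n/\beta=\alpha m_n+r_n$ with $m_n\in\BZ^d$ and $r_n\in Q_\alpha$. Since $x-\alpha k_0-r_n\in Q_\alpha-Q_\alpha\subset(-\alpha,\alpha)^d$, the point $x-\alpha k-n/\beta$ lands in one of $2^d$ cubes of index $k_0-k-m_n+\epsilon$ with $\epsilon\in E:=\{-1,0\}^d$, so $b(x-\alpha k-n/\beta)\le\sum_{\epsilon\in E}\tilde b(k_0-k-m_n+\epsilon)$. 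Summing in $k$, substituting $l=k_0-k$, and taking the supremum over $x$ (the right-hand side being $x$-independent) yields
\[
\Vert H_n\Vert_{L^\infty}\le\sum_{\epsilon\in E}\,\sum_{l\in\BZ^d}\tilde a(l)\,\tilde b(l-m_n+\epsilon).
\]

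Finally I would sum against the weight. Submultiplicativity and continuity of $w$ give $w(n/\beta)=w(\alpha m_n+r_n)\le C_w\,w(\alpha m_n)$ with $C_w=\sup_{r\in Q_\alpha}w(r)<\infty$, and the fibres $\{n:m_n=m\}$ are uniformly finite of cardinality at most $N=N(\alpha,\beta)$ (integer points in a cube of side $\alpha\beta$). Grouping the $n$-sum by $m$,
\[
\sum_n\Vert H_n\Vert_{L^\infty}\,w\!\left(\tfrac{n}{\beta}\right)\le C_w\,N\sum_{\epsilon\in E}\sum_{m\in\BZ^d}\Big(\sum_{l}\tilde a(l)\,\tilde b(l-m+\epsilon)\Big)w(\alpha m).
\]
Absorbing the shift through $w(\alpha m)\le w(\alpha\epsilon)\,w(\alpha(m-\epsilon))$ and substituting $p=m-\epsilon$ reduces the inner double sum to $\sum_p\sum_l\tilde a(l)\tilde b(l-p)\,w(\alpha p)$; then setting $j=l-p$ and using $w(\alpha(l-j))\le w(\alpha l)w(\alpha j)$ (submultiplicativity and symmetry) factorizes it as $\big(\sum_l\tilde a(l)w(\alpha l)\big)\big(\sum_j\tilde b(j)w(\alpha j)\big)=\Vert a\Vert_{W(L^{\infty},L^1_w)}\Vert b\Vert_{W(L^{\infty},L^1_w)}$. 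Collecting the finitely many constants $C_w$, $N$, $2^d$, and $\max_{\epsilon\in E}w(\alpha\epsilon)$ into a single $C=C(\alpha,\beta,w)$ gives the claim, and the finiteness of the bound also certifies the (unconditional) convergence of the defining series for $G_n$.

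The main obstacle is the interaction between the two incommensurate grids: the Walnut translation $n/\beta$ need not lie on the sampling lattice $\alpha\BZ^d$, so the clean lattice bookkeeping of the scalar Walnut estimate must be replaced by the decomposition $n/\beta=\alpha m_n+r_n$, the resulting $2^d$-fold boundary overlap, and the fibre count governing how many $n$ share a given $m_n$. Ensuring that every constant depends only on $\alpha,\beta,w$ and not on $\g,\ga$ is precisely where care is required; once the rank-one reduction to the scalar functions $a,b$ is in place, these are the only genuine modifications relative to \cite{oko}.
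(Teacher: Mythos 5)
Your proof is correct and follows essentially the same route as the paper, which simply cites Lemma 5.2 of \cite{oko} ``with appropriate modifications'': the modification the paper has in mind is precisely your reduction via $\Vert x\odot y\Vert_{B(\BH)}=\Vert x\Vert_{\BH}\Vert y\Vert_{\BH}$ to the scalar functions $a=\Vert\ga(\cdot)\Vert_{\BH}$, $b=\Vert\g(\cdot)\Vert_{\BH}$, after which your lattice bookkeeping (the decomposition $n/\beta=\alpha m_n+r_n$, the $2^d$-fold boundary overlap, the fibre count, and the submultiplicative factorization) reproduces the scalar Walnut estimate of \cite{oko}. In effect you have written out in full the argument the paper leaves to the reference, with all constants correctly tracked as depending only on $\alpha,\beta,w$.
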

\begin{proof}
Using the fact that $w$ is $w$-moderate and the translation invariance property for moderate weights (Proposition 11.2.4. of \cite{hei1}) we get $\|fw\|_{W_{\BH}(L^{\infty}, L^1)}\asymp\|f\|_{\BH(L^{\infty}, L_w^1)}.$ Using the similar idea as in Lemma 6.3.1 of \cite{gro} we get the above inequality.
\end{proof}

\begin{rmk}\label{rm1}Let $G=\{G_n\}$. Then by the above lemma $G\in\ell^1(\mathbb{Z}^d,L^\infty(\mathbb{R}^d,B(H)))$.
Then $l$-th Fourier coefficient of $G_n$ is
\begin{eqnarray*}
\hat{G}_n(l) &=& \alpha^{-d}\int_{Q_{\alpha}} G_n(x)e^{- 2 \pi i \langle l, x/\alpha \rangle} dx\\
&=& \alpha^{-d}\int_{Q_{\alpha}} (\sum_{k \in \BZ^d} T_{ak}\ga(x) \odot T_{\alpha k +\frac{n}{\beta} }\g(x))e^{- 2 \pi i \langle l, x/\alpha \rangle} dx\\
&=& \alpha^{-d}\int_{\BR^d} (\ga(x)\odot T_{\frac{n}{\beta}}\g(x)) e^{- 2 \pi i \langle l, x/\alpha \rangle} dx\\
&=& \alpha^{-d}\int_{\BR^d} \ga(x)\odot M_{\frac{l}{\alpha}} T_{\frac{n}{\beta}}\g(x) dx \\
&:=& \alpha^{-d} [\ga,M_{\frac{l}{\alpha}} T_{\frac{n}{\beta}}\g]
\end{eqnarray*}
Then Fourier series
\begin{equation}\label{eq33}
G_n(x)=\alpha^{-d} \sum\limits_{l \in \BZ^d} [\ga,M_{\frac{l}{\alpha}} T_{\frac{n}{\beta}}\g] e^{2 \pi i \langle l, x/\alpha \rangle}
\end{equation}
is convergent in $L^2(Q_{\alpha},B(\BH))$. By substituting this into Walnut's representation, we obtain the expression
\begin{eqnarray*}
S_{\g,\ga}\f=\beta^{-d}\sum_{n \in \BZ^d} G_n \left(T_{\frac{n}{\beta}}\f \right)=(\alpha \beta)^{-d}\sum_{n \in \BZ^d} \left(\sum_{l \in \BZ^d} [\ga,M_{\frac{l}{\alpha}} T_{\frac{n}{\beta}}\g] \right) \left( M_{\frac{l}{\alpha}}T_{\frac{n}{\beta}}\f \right)
\end{eqnarray*}
or in operator notation,
\begin{eqnarray}
S_{\g,\ga}=(\alpha \beta)^{-d}\sum_{n \in \BZ^d}\left( \sum_{l \in \BZ^d}  [\ga,M_{\frac{l}{\alpha}} T_{\frac{n}{\beta}}\g]\right) \left( M_{\frac{l}{\alpha}}T_{\frac{n}{\beta}} \right)
\end{eqnarray}
\end{rmk}
This is $\BH$-valued analogue of {\em Janssen's representation} for the $\BH$-valued frame operator $S_{\g,\ga}$. Using Janssen's representation we obtain the $\BH$-valued analogue of Wexler-Raz biorthogonality relation in the following theorem.
\begin{thm}\label{th9}(Wexler-Raz biorthogonality relation)
Assume that $\mathcal{G}(\g,\alpha,\beta), \; \mathcal{G}(\ga,\alpha,\beta)$ are Bessel sequence in $L^2(\BR^d,\BH)$. Then the following conditions are equivalent:\\
$(i)$ $S_{\g,\ga}=S_{\ga,\g}=I$ on $L^2(\BR^d,\BH)$.\\
$(ii)$ $(\alpha \beta)^{-d}[\ga,M_{\frac{l}{\alpha}} T_{\frac{n}{\beta}}\g]=\delta_{l0} \delta_{n0}I_{B(\BH)}$ for $l,n \in \BZ^d.$
\end{thm}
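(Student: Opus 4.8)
The plan is to derive both implications from the $\BH$-valued Janssen representation
\begin{equation*}
S_{\g,\ga}=(\alpha \beta)^{-d}\sum_{n \in \BZ^d}\Big( \sum_{l \in \BZ^d}  [\ga,M_{\frac{l}{\alpha}} T_{\frac{n}{\beta}}\g]\Big)\, M_{\frac{l}{\alpha}}T_{\frac{n}{\beta}}
\end{equation*}
obtained in the preceding remark, combined with the linear independence of the adjoint-lattice time-frequency shifts $\{M_{\frac{l}{\alpha}}T_{\frac{n}{\beta}}\}_{l,n \in \BZ^d}$. A preliminary observation I would record is that $S_{\ga,\g}=S_{\g,\ga}^{*}$: writing $S_{\g,\ga}=R_{\ga,\h}C_{\g,\h}$ and using $R_{\ga,\h}=C_{\ga,\h}^{*}$ from Theorem \ref{th7}$(c)$, one gets $\langle S_{\g,\ga}\f,\psi\rangle=\langle C_{\g,\h}\f,C_{\ga,\h}\psi\rangle=\langle \f,S_{\ga,\g}\psi\rangle$. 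Hence $S_{\g,\ga}=I$ and $S_{\ga,\g}=I$ are each the adjoint of the other, so condition $(i)$ reduces to the single equation $S_{\g,\ga}=I$.

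The implication $(ii)\Rightarrow(i)$ is then immediate. Substituting $(\alpha\beta)^{-d}[\ga,M_{\frac{l}{\alpha}} T_{\frac{n}{\beta}}\g]=\delta_{l0}\delta_{n0}I_{B(\BH)}$ into the Janssen representation collapses the double series to its single surviving term at $(l,n)=(0,0)$, namely $I_{B(\BH)}\cdot M_0 T_0=I$, so $S_{\g,\ga}=I$; the adjoint observation then yields $S_{\ga,\g}=S_{\g,\ga}^{*}=I$ as well.

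For $(i)\Rightarrow(ii)$ I would rewrite the hypothesis $S_{\g,\ga}=I$ as a vanishing expansion in the shifts. Setting $B_{ln}:=(\alpha\beta)^{-d}[\ga,M_{\frac{l}{\alpha}} T_{\frac{n}{\beta}}\g]-\delta_{l0}\delta_{n0}I_{B(\BH)}\in B(\BH)$, the Janssen representation reads $\sum_{l,n \in \BZ^d}B_{ln}\,M_{\frac{l}{\alpha}}T_{\frac{n}{\beta}}=0$ on $L^2(\BR^d,\BH)$. The whole assertion follows once we show this forces every $B_{ln}=0$, that is, that the $M_{\frac{l}{\alpha}}T_{\frac{n}{\beta}}$ remain linearly independent even with arbitrary operator coefficients from $B(\BH)$.

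This last step is the crux, and it is where the main difficulty lies. I would reduce it to the classical scalar case by testing on elementary tensors: for $\phi,\psi\in L^2(\BR^d)$ and $c,c'\in\BH$ one has $M_{\frac{l}{\alpha}}T_{\frac{n}{\beta}}(\phi\, c)=(M_{\frac{l}{\alpha}}T_{\frac{n}{\beta}}\phi)\,c$, so pairing the operator identity with $\psi\, c'$ gives $\sum_{l,n}\langle B_{ln}c,c'\rangle_{\BH}\,\langle M_{\frac{l}{\alpha}}T_{\frac{n}{\beta}}\phi,\psi\rangle_{L^2(\BR^d)}=0$. For fixed $c,c'$ the scalars $b_{ln}=\langle B_{ln}c,c'\rangle_{\BH}$ thus give a scalar operator $\sum_{l,n}b_{ln}M_{\frac{l}{\alpha}}T_{\frac{n}{\beta}}=0$; grouping the modulations this is the multiplication-translation operator $\sum_n P_n(\cdot)T_{\frac{n}{\beta}}=0$ with $P_n(x)=\sum_l b_{ln}e^{2\pi i\langle l/\alpha,x\rangle}$, and localizing the input around a single translate forces each $P_{n_0}\equiv0$, whence uniqueness of Fourier coefficients gives $b_{ln}=0$. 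As $c,c'\in\BH$ are arbitrary, $B_{ln}=0$ for all $l,n$, which is exactly $(ii)$. The interchange of summation with the pairing is legitimate because the Bessel hypothesis makes $\{[\ga,M_{\frac{l}{\alpha}}T_{\frac{n}{\beta}}\g]\}_{l,n}$ a square-summable $B(\BH)$-valued family and the Janssen series converge in the weak operator topology; securing this convergence uniformly in $c,c'$ together with the independence is the principal obstacle, after which the equivalence is complete.
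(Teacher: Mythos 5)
Your proposal is correct and, despite the different packaging, is essentially the paper's own proof: the direction $(ii)\Rightarrow(i)$ is handled identically via Janssen's representation, and in the converse your regrouped periodic functions $P_n(x)=\sum_l b_{ln}e^{2\pi i\langle l/\alpha,x\rangle}$ are exactly the matrix elements $\langle(\beta^{-d}G_n(x)-\delta_{n0}I)c,c'\rangle_{\BH}$ of the Walnut kernel, so your localization-around-a-single-translate step followed by uniqueness of Fourier coefficients is precisely the paper's argument (testing against functions supported in $Q_{1/\beta}$ to isolate $G_{m-l}$, concluding $G_n=\beta^{d}\delta_{n0}I_{B(\BH)}$, then reading off $(\alpha\beta)^{-d}[\ga,M_{\frac{l}{\alpha}}T_{\frac{n}{\beta}}\g]$ as the Fourier coefficients of $G_n$ via (\ref{eq33})), merely run scalarly after pairing with $c,c'$ rather than operator-valued via the bracket $[\cdot,\cdot]$. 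The convergence obstacle you flag at the end is not actually open in the paper's setting and needs no Bessel--Ron--Shen duality: the theorem lives under the section's standing hypothesis $\g,\ga\in W_{\BH}(L^{\infty},L^{1}_{w})$, under which Lemma \ref{lem4} gives $\sum_{n\in\BZ^d}\Vert G_n\Vert_{L^{\infty}(\BR^d,B(\BH))}\,w(n/\beta)<\infty$ and the $l$-series converges in $L^{2}(Q_{\alpha},B(\BH))$, so every regrouping and interchange you need is absolutely justified, whereas your appeal to square-summability of the brackets under a bare Bessel hypothesis would itself require a nontrivial duality argument that the paper never invokes.
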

\begin{proof}
The implication $(ii)\Rightarrow(i)$ is trivial consequence of Janssen's representation.\\
For the converse $(i)\Rightarrow(ii)$, assume that $S_{\g,\ga}=I$. Let $\f,\h \in L^{\infty}(Q_{1/\beta},\BH)$ and let $l,m \in \BZ^d$ be arbitrary. Then
\begin{eqnarray*}
\delta_{lm}[\f,\h] &=& \delta_{lm} \int_{\BR^d}\f(x)\odot\h(x)dx\\
&=& \delta_{lm} \int_{\BR^d} S_{\g,\ga} \f(x)\odot\h(x)dx\\
&=& \int_{\BR^d} S_{\g,\ga} T_{\frac{l}{\beta}}\f(x)\odot T_{\frac{m}{\beta}}\h(x)dx\\
&=& \beta^{-d} \int_{\BR^d} \sum_{n \in \BZ^d} G_n(x) \left(T_{\frac{n+l}{\beta}}\f(x) \right)\odot T_{\frac{m}{\beta}} \h(x)dx\\
&=& \beta^{-d} \int_{\BR^d} G_{m-l}(x) \left(T_{\frac{m}{\beta}}\f(x) \right)\odot T_{\frac{m}{\beta}} \h(x)dx\\
&=& \beta^{-d} \int_{\BR^d} (T_{-\frac{m}{\beta}}G_{m-l}(x))\f(x) \odot \h(x)dx\\
&=& \beta^{-d} [(T_{-\frac{m}{\beta}}G_{m-l})(\f), \h]
\end{eqnarray*}
By density this identity extends for all $\f,\h \in L^2(Q_{1/ \beta},\BH)$. Thus $\beta^{-d}G_{m-l}(x+\frac{m}{\beta})=\delta_{lm}I_{B(\BH)}$ for almost all $x \in Q_{1/\beta}$. Varying $l,m \in \BZ^d$ it follows that $G_n(x)=\beta^{d}\delta_{n0} I_{B(\BH)}$, for almost all $x \in \BR^d$. Therefore by (\ref{eq33}) and uniqueness of Fourier coefficients we have $(\alpha \beta)^{-d}[\ga,M_{\frac{l}{\alpha}} T_{\frac{n}{\beta}}\g]=\delta_{l0} \delta_{n0}I_{B(\BH)}$ .
\end{proof}
Now we are in a position to prove Theorem \ref{th1}.\\
{\bf Proof of Theorem \ref{th1}:}
$(a)$ 
Given that $\g \in \L1$ and $1 \leq p, q \leq \infty.$ Let $\f \in \LPN$.
 Viewing $\f$ as a vector in $W(L^1, L^{\infty}_{1/w})$, the series defining $\m_k$ converges in $L^1(Q_{\alpha},\BH)$. However we show that the series $\m_k$ converges actually converges unconditionally in $L^p(Q_{1/\beta},\BH)$ (weakly if $p = \infty$). For each fixed $k$, we consider
\begin{eqnarray}
 &&\int_{Q_{1/ \beta}} | \langle  \sum_{n \in \BZ^d} (\h \odot T_{\alpha k +\frac{n}{\beta} }\g(x))T_{\frac{n}{\beta}}\f(x), \hp(x) \rangle_{\BH} | dx \nonumber \\
& =&  \sum_{n \in \BZ^d} \int_{Q_{\alpha}} |\langle \f(x), T_{\alpha k}\g(x)\rangle_{\BH} \langle \h, \hp(x)\rangle_{\BH} |  T_{\alpha k +\alpha n} \chi_{Q_{\alpha}}(x) dx \nonumber\\
& \leq&  \sum_{n \in \BZ^d} \Vert \g \cdot T_{\alpha n} \chi_{Q_{\alpha}} \Vert_{L^{\infty}(\BR^d, \BH)} \Vert \f \cdot T_{\alpha k +\alpha n} \chi_{Q_{\alpha}}  \Vert_p  K^{1/p'}_{\alpha \beta} \Vert \hp \Vert_{p',Q_{1/\beta} } \frac{C_v v(\alpha k +\alpha n)w(\alpha n)}{v(\alpha k)} \nonumber\\
& =&  C_v K^{1/p'}_{\alpha \beta} \Vert \hp \Vert_{p',Q_{1/\beta} } \frac{1}{v(\alpha k)} \sum_{n \in \BZ^d} \Vert \g \cdot T_{\alpha n} \chi_{Q_{\alpha}} \Vert_{L^{\infty}(\BR^d, \BH)} w(\alpha n)  \Vert \f \cdot T_{\alpha k +\alpha n} \chi_{Q_{\alpha}}  \Vert_p v(\alpha k +\alpha n)\nonumber.
\end{eqnarray}
Taking the supremum in over $\hp$ with unit norm we get,
\begin{eqnarray*}\label{eq10}
& \Vert \m_k \Vert_{p, Q_{1/ \beta}} \leq \beta^{-d} C_v K^{1/p'}_{\alpha \beta} \frac{1}{v(\alpha k)} \sum_{n \in \BZ^d} \Vert \g \cdot T_{\alpha n} \chi_{Q_{\alpha}} \Vert_{L^{\infty}(\BR^d, \BH)} w(\alpha n) \Vert \f \cdot T_{\alpha k +\alpha n} \chi_{Q_{\alpha}}  \Vert_p v(\alpha k +\alpha n),
\end{eqnarray*} where $ K_{\alpha \beta}=\max\limits_{k \in \BZ^d} \#\{ \ell\in \BZ^d : |(\frac{\ell}{\beta}+Q_{1/ \beta}) \cap (\alpha k +Q_{\alpha})|>0 \}. $
This shows the convergence of the series defining $\m_k$ in $L^p(Q_{1/\beta},\BH)$. Note that,
\begin{eqnarray*}
\hat{\m}_k(n) &=& \beta^d \int_{Q_{1/ \beta}} \m_k(x) e^{- 2 \pi i \beta \langle n,x \rangle} dx \\
&=& \int_{Q_{1/ \beta}} \sum_{m \in \BZ^d} (\h \odot T_{\alpha k +\frac{m}{\beta} }\g(x))T_{\frac{m}{\beta}}\f(x) e^{- 2 \pi i \beta \langle n,x \rangle} dx \\
&=& C_{\g, \h}\f(k,n)
\end{eqnarray*}
Now we show that $C_{\g, \h}$ is a bounded mapping of $\LPN$ into $\SP$. Define $r(k)=\Vert \m_k \Vert_{p, Q_{1/ \beta}},\; k \in \BZ^d $. We show the sequence  $ r(k)\in \ell^q_{\tilde{v}}$ which would imply $C_{\g, \h}\f \in \SP$. Take the sequence $a \in \ell^{q'}_{1/ \tilde{v}}$. Then we have
\begin{align}\label{eq11}
|\langle r,a \rangle| & \leq \sum_{k \in \BZ^d} \Vert \m_k \Vert_{p, Q_{1/ \beta}} |a(k)| \nonumber \\
& \leq \beta^{-d} C_v K^{1/p'}_{\alpha \beta}  \sum_{n \in \BZ^d} \Vert \g \cdot T_{\alpha n} \chi_{Q_{\alpha}} \Vert_{L^{\infty}(\BR^d, \BH)} w(\alpha n) \nonumber \\
& \;\;\;\;\;\;\; \left( \sum_{k \in \BZ^d} \Vert \f \cdot T_{\alpha k +\alpha n} \chi_{Q_{\alpha}}  \Vert^q_p {v(\alpha k +\alpha n)}^q \right)^{1/q} \left( \sum_{k \in \BZ^d} |a(k)|^{q'} \frac{1}{{v(\alpha k)}^{q'}} \right)^{1/{q'}} \nonumber \\
& \leq \beta^{-d} C_v K^{1/p'}_{\alpha \beta} \Vert \g \Vert_{\L1} \Vert \f \Vert_{\LPN} \Vert a \Vert_{\ell^{q'}_{1/ \tilde{v}}}.
\end{align}
Taking the supremum  over sequences $a$ with unit norm in (\ref{eq11}) we get
$ \Vert C_{\g, \h}\f \Vert_{\SP}=\Vert r \Vert_{\ell^q_{\tilde{v}}} \leq \beta^{-d} C_v K^{1/p'}_{\alpha \beta} \Vert \g \Vert_{\L1} \Vert \f \Vert_{\LPN}.$
Hence $ C_{\g, \h}$ is a bounded mapping of $\LPN$ into $\SP$. This proves $(a)$.

$(b)$ 
Let $1 \leq p,q < \infty.$ Given $d \in \SP$, we have $\sum\limits_{k \in \BZ^d} \Vert \m_k \Vert^q_{p,Q_{1/ \beta}} \tilde{v}(k)^q < \infty.$ That means for every $\varepsilon>0$, there exists a finite set $F_0$ such that
\begin{equation}\label{eq5}
 \sum\limits_{k \notin F} \Vert \m_k \Vert^q_{p,Q_{1/ \beta}} \tilde{v}(k)^q < \varepsilon^q, \;\; \forall \; \mbox{finite}~ F \supset F_0.
\end{equation}
Since $1/v$ is also an $w$-moderate weight, for any $\hp \in \LPQ$, we have
\begin{eqnarray}
&& \sum_{k \notin F} | \langle \langle \m_k(\cdot),\h \rangle_{\BH} T_{\alpha k} \g , \hp \rangle | \nonumber \\
& \leq&   \sum_{k \notin F} \int_{\BR^d} | \langle \langle \m_k(x),\h \rangle_{\BH} T_{\alpha k} \g(x) , \hp(x) \rangle_{\BH} | dx \nonumber  \\
&=& \sum_{k \notin F} \sum_{n \in \BZ^d} \int_{Q_{\alpha}} |\langle \m_k(x),\h \rangle_{\BH} \langle T_{\alpha k} \g(x) , \hp(x) \rangle_{\BH} | T_{\alpha n + \alpha k} \chi_{Q_{\alpha}}(x) dx  \nonumber \\
& \leq& \sum_{k \notin F} \sum_{n \in \BZ^d} \Vert T_{\alpha k} \g \cdot T_{\alpha n + \alpha k} \chi_{Q_{\alpha}} \Vert_{L^{\infty}(\BR^d, \BH)} \Vert \m_k \Vert_{p, \alpha n + \alpha k+ Q_{\alpha} } \Vert \hp \cdot T_{\alpha n + \alpha k} \chi_{Q_{\alpha}} \Vert_{p'} \;\; \frac{v(\alpha k)}{v(\alpha n + \alpha k- \alpha n)} \nonumber \\
& \leq& \sum_{n \in \BZ^d} \Vert \g \cdot T_{\alpha n} \chi_{Q_{\alpha}} \Vert_{L^{\infty}(\BR^d, \BH)}\sum_{k \notin F} K^{1/p}_{\alpha \beta} \Vert \m_k \Vert_{p, Q_{1/ \beta} } \Vert \hp \cdot T_{\alpha n + \alpha k} \chi_{Q_{\alpha}} \Vert_{p'} \;\; \frac{C_v v(\alpha k)w(\alpha n)}{v(\alpha n + \alpha k)}\nonumber.
\end{eqnarray}
Using (\ref{eq5}), we get $\sum_{k \notin F} | \langle \langle \m_k(\cdot),\h \rangle_{\BH} T_{\alpha k} \g , \hp \rangle | \leq \epsilon C_v K^{1/p}_{\alpha \beta}  \Vert \g \Vert_{\L1} \Vert \hp \Vert_{\LPQ}.$
Therefore,
$R_{\g,\h}d=\sum\limits_{k \in \BZ^d} \langle \m_k(\cdot),\h \rangle_{\BH} T_{ak}\g $ converges unconditionally. Replacing $F$ by $\BZ^d$, we get
\begin{eqnarray} \label{eq7}
 |\langle R_{\g,\h}d, \hp \rangle|
 & \leq & \sum_{k \in \BZ^d} | \langle \langle \m_k(\cdot),\h \rangle_{\BH} T_{\alpha k} \g , \hp \rangle | \nonumber \\
& \leq & C_v K^{1/p}_{\alpha \beta} \Vert \g \Vert_{\L1} \Vert d \Vert_{\SP} \Vert \hp \Vert_{\LPQ}.\nonumber
\end{eqnarray}
Thus
\begin{eqnarray} \label{eq8}
\Vert R_{\g,\h}d \Vert_{\LPN}  \leq  C_v K^{1/p}_{\alpha \beta} \Vert \g \Vert_{\L1} \Vert d \Vert_{\SP},
\end{eqnarray}
This completes the proof for the case $1 \leq p,q < \infty .$
 Since $\LPQ$ is the K\"othe dual of $\LPN$, a similar argument as in (\ref{eq5}), (\ref{eq7}) will imply the  convergence  of $R_{\g,\h}$ in the weak topology when $p= \infty $ or $q= \infty $ and the estimate $\Vert R_{\g,\h}d \Vert_{\LPN}$ as in (\ref{eq8}). This proves part $(b)$.

$(c)$ Now we show the frame operator  $R_{\ga,\h}C_{\g,\h}$  admits Walnut's representation on $\LPN.$\\
Given $\g, \ga \in \L1$ and $1 \leq p,q \leq \infty .$
Notice that for a $w$-moderate weight $v$ and $\f \in \LPN$, $1 \leq p,q \leq \infty$ and for each $n \in \BZ^d$, $\alpha>0$ we have
\begin{equation}\label{eqt}
\Vert T_{\alpha n} \f \Vert_{\LPN} \leq C_v w(\alpha n) \Vert \f \Vert_{\LPN}
\end{equation}
Replacing $\alpha$ by $1/ \beta$ in (\ref{eqt}) we get,
\[ \Vert T_{\frac{n}{ \beta}} \f \Vert_{\LPN} \leq C_v w(\frac{n}{ \beta}) \Vert \f \Vert_{\LPN}. \]
Therefore, for $\f \in \LPN$ consider
\begin{eqnarray*}
\sum_{n \in \BZ^d} \Vert G_n \left(T_{\frac{n}{\beta}}\f \right) \Vert_{\LPN} & \leq & \sum_{n \in \BZ^d} \Vert G_n \Vert_{L^{\infty}(\BR^d, B(\BH))} \Vert T_{\frac{n}{\beta}}\f \Vert_{\LPN} \\
& \leq & C_v \Vert \f \Vert_{\LPN} \sum_{n \in \BZ^d} \Vert G_n \Vert_{L^{\infty}(\BR^d, B(\BH))} w(\frac{n}{ \beta})\\
& \leq & C C_v \Vert \f \Vert_{\LPN} \Vert \g \Vert_{\L1} \Vert \ga \Vert_{\L1},
\end{eqnarray*}
by Lemma \ref{lem4}. Therefore the series $\sum\limits_{n \in \BZ^d}  G_n\left(T_{\frac{n}{\beta}}\f \right) $ converges absolutely in $\LPN$.\\
Now for fixed $\f \in \LPN$, define $\m_k$ such that $c_{\g, \h} \f(k,n) =\hat{\m}_k(n)$ and $\hp \in \LPQ$ we have
\begin{eqnarray*}
 \langle R_{\ga , \h}C_{\g, \h} \f ,\hp \rangle
&=& \sum_{k \in \BZ^d} \langle \langle \m_k(\cdot),\h \rangle_{\BH} T_{ak}\ga ,\hp \rangle \\
&=& \sum_{k \in \BZ^d} \int_{\BR^d} \langle \langle \m_k(x),\h \rangle_{\BH} T_{ak}\ga(x) ,\hp(x) \rangle_{\BH} dx \\
&=& \beta^{-d} \sum_{k \in \BZ^d} \int_{\BR^d} \sum_{n \in \BZ^d} \langle \langle (\h \odot T_{\alpha k +\frac{n}{\beta} }\g(x))T_{\frac{n}{\beta}}\f(x) ,\h \rangle_{\BH} T_{ak}\ga(x) ,\hp(x) \rangle_{\BH} dx \\
&=& \beta^{-d} \sum_{k \in \BZ^d} \int_{\BR^d} \sum_{n \in \BZ^d} \langle \langle \langle T_{\frac{n}{\beta}}\f(x), T_{\alpha k +\frac{n}{\beta} }\g(x) \rangle_{\BH} \h ,\h \rangle_{\BH} T_{ak}\ga(x) ,\hp(x) \rangle_{\BH} dx \\
&=& \beta^{-d} \sum_{k \in \BZ^d} \int_{\BR^d} \sum_{n \in \BZ^d} \langle \langle T_{\frac{n}{\beta}}\f(x), T_{\alpha k +\frac{n}{\beta} }\g(x) \rangle_{\BH} T_{ak}\ga(x) ,\hp(x) \rangle_{\BH} dx \\
&=& \beta^{-d} \sum_{n \in \BZ^d} \int_{\BR^d} \sum_{k \in \BZ^d} \langle (T_{ak}\ga(x) \odot T_{\alpha k +\frac{n}{\beta} }\g(x))T_{\frac{n}{\beta}}\f(x) ,\hp(x) \rangle_{\BH} dx \\
&=& \beta^{-d} \sum_{n \in \BZ^d}  \langle G_n\left(T_{\frac{n}{\beta}}\f \right), \hp \rangle.
\end{eqnarray*}
The interchanges of integration and summation can be justified by Lemma \ref{lem4} and Fubini's Theorem. This proves (c).$ \hfill\square$

\subsection{Convergence of Gabor expansions}

\begin{prop}\label{pr3}
Let $\alpha,\beta>0$ and $1<p<\infty, \; 1 \leq q < \infty$ and $\g, \ga \in \L1$. Let $v$ be an $w$-moderate weight on $\BR^d$.  If $\h \in \BH$ with unit norm and $\mathcal{G}(g, \alpha, \beta)$ is a Gabor frame for $L^2(\BR^d,\BH)$ with dual window $\ga$ then
\begin{enumerate}
\item[(a)] the partial sums
\[ S_{K,N}d=\sum_{| k |\leq K} \sum_{|n| \leq N} \langle d_{kn}, \h \rangle_{\BH} M_{\beta n}T_{\alpha k}\g, \;\;\; K,N>0, \]
converge to $R_{\g,\h}d$ in the norm of $\LPN$, where $d \in \SP.$
\item[(b)] the partial sums of the Gabor expansion \[ S_{K,N}(C_{\g,\h}\f)=\sum_{| k |\leq K} \sum_{|n|\leq N} \langle \f,M_{\beta n}T_{\alpha k}\g \rangle M_{\beta n}T_{\alpha k}\ga  \]
converge to $\f$ in the norm of $\LPN$ and in the $\sigma(\LPN,\LPQ)$-topology for $1 \leq p,q \leq \infty$.
\end{enumerate}
\end{prop}
\begin{proof}
(a) Let $\varepsilon>0$ be given. Write $R_{\g,\h}d-S_{K,N}d=(R_{\g,\h}d-S_{K_0}d)+(S_{K_0}d-S_{K_0,N}d)+(S_{K_0,N}d-S_{K,N}d),$ where $S_{K_0}d=\sum_{|k|\leq K_0}\langle m_k(\cdot), h\rangle_\BH T_{\alpha k}\g.$
Using the boundedness of the operator $R_{\g,\h}:\SP \rightarrow \LPN$ we have
\begin{eqnarray}\label{eq17}
\Vert R_{\g,\h}d-S_{K_0}d \Vert_{\LPN}
&=& \Vert R_{\g,\h}\Vert \left( \sum_{|k| > K_0} \Vert \m_k \Vert^q_{p,Q_{1/ \beta}} \tilde{v}(k)^q \right)^{1/q} \nonumber \\
&\leq & \Vert R_{\g,\h}\Vert \varepsilon.
\end{eqnarray}
Again repeating a similar argument as above and (\ref{ew4}) we have \begin{eqnarray}\label{eq18}
\Vert S_{K_0}d-S_{K_0,N}d \Vert_{\LPN} &\leq & \Vert R_{\g,\h}\Vert \left( \sum_{|k| \leq K_0} \Vert \m_k - S_N \m_k \Vert^q_{p, Q_{1/\beta}} \tilde{v}(k)^q \right)^{1/q} \nonumber \\
&\leq & \Vert R_{\g,\h}\Vert \varepsilon,
\end{eqnarray}
where $S_N \m_k=\sum_{|n|\leq N} d_{kn}e^{2 \pi i \beta \langle n,x \rangle} $ converging to $\m_k$ in the norm of $L^p(Q_{1/ \beta}, \BH)$, cf. \cite{ru}.
Finally \begin{eqnarray}\label{eq19}
\Vert S_{K_0,N}d-S_{K,N}d \Vert_{\LPN}&=& \Vert R_{\g,\h}\Vert \left( \sum_{K_0<|k| \leq K} \Vert S_N \m_k \Vert^q_{p,Q_{1/ \beta}} \tilde{v}(k)^q \right)^{1/q} \nonumber \\
&\leq & C_1 \Vert R_{\g,\h}\Vert  \left( \sum_{K_0<|k| \leq K} \Vert \m_k \Vert^q_{p,Q_{1/ \beta}} \tilde{v}(k)^q \right)^{1/q}  \nonumber \\
&\leq & C_1 \Vert R_{\g,\h}\Vert \varepsilon,
\end{eqnarray}
where for each $k \in \BZ^d$ we can find a $C_1>0$ satisfies
\begin{equation}\label{eq13}
\sup_{N>0}\Vert S_N \m_k \Vert_{p, Q_{1/\beta}} \leq C_1 \Vert \m_k \Vert_{p, Q_{1/\beta}}.
\end{equation}

Choose $K_0,N_0\in\mathbb{N}$ such that the inequalities (\ref{eq17})-(\ref{eq19}) are satisfied for all $K>K_0,~N>N_0$ and $\Vert R_{\g,\h}d-S_{K,N}d \Vert_{\LPN} \leq (2+C_1) \Vert R_{\g,\h}\Vert \varepsilon$, which completes the proof of part (a).\\
(b) By  (\ref{*}) $R_{\g,\h}C_{\g,\h}=I$ on $L^2(\BR^d,\BH)$. This  would imply $G_n(x)=\delta_{n0}\beta^dI_{B(\BH)}$ for almost all $x\in\mathbb{R}^d.$ Using (\ref{eq3}) and the conclusion of the previous part together proves (b).
\end{proof}
 The Walnut's representation for superframe operator and the multi-window Gabor frame operator can be obtained by choosing an appropriate Hilbert space in Theorem \ref{th1}. However, we list out some consequences of Theorem \ref{th1} and Proposition \ref{pr3} in the following remark.
\begin{rmk}\label{r}

(i) If $\BH=\BC$ then the rank one operator $x\odot y$ turns out to the point-wise product $xy$ and all the above results for the $\BH$-valued Gabor frame viz. Walnut's representation of $\BH$-valued Gabor frame operator, convergence of Gabor expansions, etc., coincides with the results for the scalar valued Gabor frames (see \cite{fei, oko, wal, weis}).

(ii) If $\BH=\BC^n$ then the  $\BH$-valued Gabor frame is the super Gabor frame (see \cite{lyu}). The  Gabor expansions for the Gabor super-frames on vector valued amalgam spaces also converges by Proposition \ref{pr3}.

(iii) Let $(\mathbb{H}_1,\langle\cdot,\cdot\rangle_1),(\mathbb{H}_2,\langle\cdot,\cdot\rangle_2),\cdots,(\mathbb{H}_r,\langle\cdot,\cdot\rangle_r)$ be $r$ Hilbert spaces. If $\mathbb{H}= \bigoplus_{i=1}^r\mathbb{H}_i$ (i.e $\mathbb{H}$ is the direct sum of $r$ Hilbert spaces) then $\mathbb{H}$ is also a Hilbert space with respect to the inner product $\langle x,y \rangle=\displaystyle\sum_{i=1}^r\langle x_i,y_i \rangle$ where $x=\displaystyle\oplus_{i=1}^r x_i,~y=\oplus_{i=1}^r y_i,\,x,y\in\mathbb{H}, x_i,y_i\in \mathbb{H}_i, i=1,2\cdots,r.$ If $f:\mathbb{R}^d\to \mathbb{H} $ then $f(x)=f_1(x) \bigoplus f_2(x)\bigoplus\cdots \bigoplus f_r(x)$ with $f_i(x)\in \mathbb{H}_i,~i=1,2\cdots,r.$ Note that $f\in W_\mathbb{H}(L^p,L_w^q)\Leftrightarrow f_i\in W_{\mathbb{H}_i}(L^p,L_w^q)$ for all $i=1,2,\cdots,r.$ The frame operator of the Gabor system on $W_\mathbb{H}(L^p,L_w^q)$ with respect to a single lattice $\Lambda=\alpha\mathbb{Z}\times\beta\mathbb{Z}$ is given by
\begin{eqnarray*}
S_{\g,\ga}\f&=&\beta^{-d} \sum_{n \in \BZ^d} G_n\left( T_{\frac{n}{\beta}} \f \right)\\
&=&\beta^{-d} \sum_{n \in \BZ^d}\sum_{k \in \BZ^d} T_{\alpha k} \ga(x) \odot T_{\alpha k + \frac{n}{\beta}} \g(x)\left( T_{\frac{n}{\beta}}\f\right)\\
&=& \beta^{-d} \sum_{n \in \BZ^d}\sum_{k \in \BZ^d} T_{\alpha k} \left(\bigoplus_{i=1}^r\ga_i(x)\right) \odot T_{\alpha k + \frac{n}{\beta}} \left(\bigoplus_{i=1}^r\g_i(x)\right)\left( T_{\frac{n}{\beta}}\left(\bigoplus_{i=1}^r\f_i\right)\right)\\
&=& \beta^{-d} \sum_{n \in \BZ^d}\sum_{k \in \BZ^d}\sum_{i=1}^r \left(T_{\alpha k}\ga_i(x)\odot T_{\alpha k + \frac{n}{\beta}}\g_i(x)\right)\left( T_{\frac{n}{\beta}}\f_i\right)\\&=& \sum_{i=1}^rS_{\g_i,\ga_i}\f_i,
\end{eqnarray*} where $\g(x)=\bigoplus_{i=1}^r\g_i(x),\,\ga(x)=\bigoplus_{i=1}^r\ga_i(x)$ and $S_{\g_i,\ga_i}$ is the frame operator of the Gabor system on $W_{\mathbb{H}_i}(L^p,L_w^q)$.
 If $\mathbb{H}_1=\mathbb{H}_2\cdots=\mathbb{H}_r=\mathbb{C}$ then the $\BH-$valued Gabor frame turns out to Gabor superframe.

(iv) Let $\Lambda=\Lambda^1 \times ... \times \Lambda^r$ be the Cartesian
product of separable lattices $\Lambda^i=\alpha_i \BZ^d \times \beta_i \BZ^d$ and let $\g_1,...,\g_r,\ga_1,...,\ga_r \in W_{\mathbb{H}_i}(L^{\infty}, L^1_w)$. Suppose the collection $\mathcal{G}_i(\g_i,\alpha_i,\beta_i)$ is a frame for $L^2(\mathbb{R}^d,\mathbb{H}_i)$ with the corresponding frame operator $S_{\g_i,\ga_i}^{\Lambda^i}$. As in the pervious set up (as in (iii)) we show that frame operator associated  with the Gabor system on $W_{\mathbb{H}}(L^p,L_w^q)$  is the sum of frame operator associated  with the Gabor systems on $W_{\mathbb{H}_i}(L^p,L_w^q)$. In this case we consider cartesian product of separable lattices $\Lambda^i=\alpha_i \BZ^d \times \beta_i \BZ^d,~ i=1, 2, \cdots, r$ instead of a single lattice $\alpha \BZ^d \times \beta \BZ^d$. For $1\leq p,q\leq\infty$ the space $\SP$ turns out to be $S_{\tilde{v}}^{p,q}(\mathbb{H}_1)\times S_{\tilde{v}}^{p,q}(\mathbb{H}_2)\times\cdots\times S_{\tilde{v}}^{p,q}(\mathbb{H}_r)$ with the norm $$\|d\|_{\SP}=\|(d^1,d^2\cdots,d^r)\|_{\SP}=\sum_{i=1}^r\|d^i\|_{S_{\tilde{v}}^{p,q}(\mathbb{H}_r)},$$ where ${S_{\tilde{v}}}^{p,q}(\mathbb{H}_i)$is defined as in Definition \ref{s} with respect to the lattice $\Lambda^i$ and the Hilbert space $H_i$. For $x\in\mathbb{R}^d, k,n\in\mathbb{Z}^d,~ \alpha=(\alpha_1,\alpha_2\cdots,\alpha_r)$ and $\beta=(\beta_1,\beta_2\cdots,\beta_r)$ with $\alpha_i>0,\beta_i>0$  the translation operator $T_{\alpha k}$ and the modulation operator $M_{\beta n}$ is defined as $T_{\alpha k}\f(x)=\bigoplus_{i=1}^rT_{\alpha_i k}\f_i(x)$ and $M_{\beta n}\f(x)=\bigoplus_{i=1}^rM_{\beta_i n}\f_i(x)$, where $\f(x)=\bigoplus_{i=1}^r\f_i(x)$. The frame operator of the Gabor system on $W_\mathbb{H}(L^p,L_w^q)$ is given by \begin{eqnarray*}S_{\g,\ga}^{\Lambda}\f&=&\beta^{-d} \sum_{n \in \BZ^d} G_n\left( T_{\frac{n}{\beta}} \f \right)\\&=&\beta^{-d} \sum_{n \in \BZ^d}\sum_{k \in \BZ^d} T_{\alpha k} \ga(x) \odot T_{\alpha k + \frac{n}{\beta}} \g(x)\left( T_{\frac{n}{\beta}}\f\right)\\
    &=& \beta^{-d} \sum_{n \in \BZ^d}\sum_{k \in \BZ^d}  \left(\bigoplus_{i=1}^rT_{\alpha_i k}\ga_i(x)\right) \odot  \left(\bigoplus_{i=1}^rT_{\alpha_i k + \frac{n}{\beta_i}}\g_i(x)\right)\left( \left(\bigoplus_{i=1}^rT_{\frac{n}{\beta_i}}\f_i\right)\right)\\&=& \beta^{-d} \sum_{n \in \BZ^d}\sum_{k \in \BZ^d}\sum_{i=1}^r \left(T_{\alpha_i k}\ga_i(x)\odot T_{\alpha_i k + \frac{n}{\beta_i}}\g_i(x)\right)\left( T_{\frac{n}{\beta_i}}\f_i\right)\\&=& \sum_{i=1}^rS_{\g_i,\ga_i}^{\Lambda^i}\f_i,
        \end{eqnarray*} where $\beta^{-d}=\prod_{i=1}^r\beta_i^{-d}, \g(x)=\bigoplus_{i=1}^r\g_i(x),\,\ga(x)=\bigoplus_{i=1}^r\ga_i(x)$ and $S_{\g_i,\ga_i}^{\Lambda^i}$ is the frame operator of the Gabor system on $W_{\mathbb{H}_i}(L^p,L_w^q)$ with respect to the lattice $\Lambda^i$.

        If $\mathbb{H}_1=\mathbb{H}_2\cdots=\mathbb{H}_r=\mathbb{C}$ and $\f(x)=f(x)\bigoplus f(x)\bigoplus\cdots\bigoplus f(x)$ (r-times tensor product of $f(x)$ with itself) where $f\in W(L^p,L_w^q)$ then the $\BH-$valued Gabor frame turns out to ``multi-window Gabor frame".
\end{rmk}
\section{The algebra of $\BH$-valued $L^{\infty}$-weighted shifts}
\subsection{$\BH$-valued $L^{\infty}$-weighted shifts}
In this section we construct a Banach algebra, based on the structure of Walnut's representation for the frame operator in (\ref{eq3}) and develop necessary tools for invertibility of the frame operator on $\LPN.$  For an admissible weight function $w$ we construct a Banach $*-$algebra $\mathcal{A}_w$ of weighted shift operators in $B(L^p(\mathbb{R}^d,\mathbb{H}))$ and identify with $AP^p_w(\rho)$, the class of $\rho$-almost periodic elements, having $w$-summable Fourier coefficients. Finally we prove the spectral invariance theorem on $AP^p_w(\rho)$ which assures spectral invariance property on $\mathcal{A}_w$.
 Let us start with a definition of multiplication operator on $B(L^p(\BR^d,\BH))$.
\begin{defn}
Let $\phi \in L^{\infty}(\BR^d,B(\BH))$, then we define the multiplication operator $T_{\phi}:L^p(\BR^d,\BH)\rightarrow L^p(\BR^d,\BH)$ defined by $(T_{\phi}f)(x)=\phi(x)(f(x)), \; x \in \BR^d.$
\end{defn}
Note that $T_{\phi}$ is linear, bounded and $\Vert T_{\phi} \Vert=\Vert \phi \Vert_{L^{\infty}(\BR^d,B(\BH))} .$ For an admissible weight $w$  define 
$\mathcal{A}_w:=\{\mathcal{M}=(\m_x)_{x \in \BR^d}\in L^{\infty}(\BR^d,B(\BH)) :\displaystyle\sum_{x \in \BR^d} \Vert \m_x \Vert_{L^{\infty}(\BR^d,B(\BH))}w(x)< +\infty\},$ with norm
$\|\mathcal{M}\|_{\mathcal{A}_w}=\displaystyle\sum_{x \in \BR^d} \Vert \m_x \Vert_{L^{\infty}(\BR^d,B(\BH))}w(x)< +\infty.$


  If the family $\mathcal{M}=(\m_x)_{x \in \BR^d}\in\mathcal{A}_w$, then $\mathcal{M}=(\m_x)_{x \in \BR^d}$ has countable support. The identification of $\mathcal{A}_w$ with the subclass of bounded operators on $L^2(\BR^d,\BH)$ is as follows: Given $(\m_x)_{x \in \BR^d} \in \mathcal{A}_w$, define the operator $T:L^p(\BR^d,\BH)\rightarrow L^p(\BR^d,\BH)$ by $T(\f)=\sum\limits_{x \in \BR^d}\m_x(T_x\f).$ Clearly $T$ is well defined, linear and bounded on all $L^p(\BR^d, \BH),\; 1\leq p \leq \infty $ (by using admissibility of $w$). The identification $\f \mapsto \sum\limits_{x \in \BR^d}\m_x(T_x\f)$ maps $\mathcal{A}_w$ into a closed subspace of $B(L^p(\BR^d,\BH))$. We write $\mathcal{M} \in \mathcal{A}_w $ means
$\mathcal{M}=\sum\limits_{x \in \BR^d} \m_x(T_x),  \;(\m_x)_{x \in \BR^d} \in \ell^1_w(\BR^d,L^{\infty}(\BR^d,B(\BH))).$

If we endow $\mathcal{A}_w$ with the product and involution inherited from $B(L^2(\BR^d,\BH))$ then $\mathcal{A}_w$ is a Banach *-algebra which embeds continuously into $B(L^2(\BR^d,\BH))$:  for $(\m_x)_{x \in \BR^d},({\bf n}_x)_{x \in \BR^d} \in \mathcal{A}_w$, define
$\left( \sum_{x \in \BR^d} \m_x(T_x) \right) \cdot \left( \sum_{x \in \BR^d}{\bf n}_x(T_x) \right)=\sum_{x \in \BR^d}\left( \sum_{y \in \BR^d}\m_y {\bf n}_{x-y}(\cdot -y) \right)(T_x), $
and the involution by
$ \left( \sum_{x \in \BR^d} \m_x(T_x) \right)^*=\sum_{x \in \BR^d} \overline{\m_x(\cdot +x)}(T_{-x})=\sum_{x \in \BR^d} \overline{\m_{-x}(\cdot -x)}(T_{x}). $
Notice that the identification of families in $\mathcal{A}_w$ and operators on $B(L^p(\BR^d,\BH))$ is one-to-one. We simply write $\mathcal{A}_w \subset B(L^p(\BR^d,\BH)) $ and we treat members of $\mathcal{A}_w$ as operators on $L^p(\BR^d,\BH)$. The following result for $\m \in L^{\infty}(\BR^d,B(\BH))$ plays a crucial role in proving the spectral invariance theorem.
\begin{lem}\label{lem6}
 For $\m \in L^{\infty}(\BR^d,B(\BH))$ and $x,w \in \BR^d$. The following relation hold.
\begin{equation}\label{eq31}
M_{w}\m(T_xM_{-w})=e^{2 \pi i \langle w,x \rangle}\m(T_x)
\end{equation}
\end{lem}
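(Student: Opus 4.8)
The plan is to read off the identity by a direct, one-layer-at-a-time evaluation of both operators on an arbitrary $\f \in L^p(\BR^d,\BH)$ at an arbitrary point $t \in \BR^d$, after recalling that the left-hand side is the composition $M_w \circ \m \circ T_x \circ M_{-w}$, in which $\m$ acts as the multiplication operator $(\m\f)(s) = \m(s)(\f(s))$ with $\m(s) \in B(\BH)$.

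First I would unwind the innermost block $T_x M_{-w}$. Writing $(M_{-w}\f)(s) = e^{-2\pi i \langle w, s\rangle}\f(s)$ and then translating, the key observation is that translating the chirp $e^{-2\pi i\langle w,\cdot\rangle}$ is exactly what manufactures the phase $e^{2\pi i\langle w,x\rangle}$; concretely $(T_x M_{-w}\f)(s) = e^{2\pi i\langle w,x\rangle}\, e^{-2\pi i\langle w,s\rangle}\f(s-x)$. I would then apply $\m$, noting that the scalar factors pass through the linear map $\m(s)$, and finally apply $M_w$, whose exponential $e^{2\pi i\langle w,s\rangle}$ cancels the surviving $e^{-2\pi i\langle w,s\rangle}$. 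Evaluating at $t$ leaves $e^{2\pi i\langle w,x\rangle}\m(t)(\f(t-x)) = e^{2\pi i\langle w,x\rangle}(\m T_x\f)(t)$, which is the right-hand side.

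A cleaner, coordinate-free variant I would alternatively present uses two elementary facts: the canonical commutation relation $T_x M_{-w} = e^{2\pi i\langle w,x\rangle}M_{-w}T_x$, and the fact that a multiplication operator commutes with every modulation (since modulation is pointwise multiplication by a scalar and each $\m(s)$ is $\BC$-linear), so that $\m M_{-w} = M_{-w}\m$. Substituting these and using $M_w M_{-w} = I$ collapses $M_w\m T_x M_{-w}$ to $e^{2\pi i\langle w,x\rangle}M_w M_{-w}\m T_x = e^{2\pi i\langle w,x\rangle}\m T_x$.

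There is no real obstacle here: the statement is a bookkeeping identity about time-frequency shifts. The only point deserving a remark is that $\m$ is operator-valued rather than scalar-valued; however, since each $\m(s)$ is linear over $\BC$, the scalar phases commute through it exactly as in the classical scalar case, so the argument goes through verbatim.
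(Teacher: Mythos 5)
Your proposal is correct and is essentially the paper's own argument: both proofs evaluate $M_{w}\m(T_xM_{-w})\f$ pointwise, use $(T_xM_{-w}\f)(s)=e^{2\pi i\langle w,x\rangle}e^{-2\pi i\langle w,s\rangle}\f(s-x)$, and pull the scalar phases through the operator $\m(s)$ before the $M_w$ factor cancels the remaining chirp. The only difference is presentational: the paper justifies passing the phase through $\m(y)$ by expanding $\m(y)$ as an infinite matrix $\left(a_{nj}(y)\right)$ with respect to an orthonormal basis of $\BH$, whereas you invoke the $\BC$-linearity of each $\m(s)$ directly (and, in your second variant, the commutation relation $T_xM_{-w}=e^{2\pi i\langle w,x\rangle}M_{-w}T_x$ together with $\m M_{-w}=M_{-w}\m$), which is cleaner and is exactly what the paper's basis expansion encodes.
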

\begin{proof}
The proof of this result is trivial if $\mathbb{H}=\mathbb{C}.$ Otherwise, for each $y\in \mathbb{R}^d,~m(y)$ is a linear bounded operator on $\mathbb{H}$, which we view as an infinite matrix with scalar entries and prove the lemma.
Since $\BH$ is separable Hilbert space and for $y \in \BR^d$, $\m(y) \in B(\BH)$ can be written as $\m(y)u=\m(y)\left(\sum_{n}\langle u,e_n \rangle_{\BH}e_n \right)=\sum_{n}\langle u,e_n \rangle_{\BH}\m(y)e_n=\sum_{n,j}\langle u,e_n \rangle_{\BH}a_{nj}(y)e_j,$ where $u \in \BH$ and $(e_n)_n$ is orthonormal basis for $\BH$. Now for $\f \in L^p(\BR^d,\BH)$
\begin{eqnarray*}
M_w\m(y)(T_xM_{-w}\f(y))&=& e^{2 \pi i \langle w,y  \rangle} \sum_{n,j}\langle T_xM_{-w}\f(y),e_n \rangle_{\BH}a_{nj}(y)e_j\\
&=& e^{2 \pi i \langle w,x  \rangle} \sum_{n,j}\langle T_x\f(y),e_n \rangle_{\BH}a_{nj}(y)e_j\\
&=& e^{2 \pi i \langle w,x  \rangle} \m(y)(T_x\f(y))
\end{eqnarray*}
\end{proof}
\begin{prop}\label{pr1}
Let $1\leq p,q \leq \infty $ and let $v$ be a $w$-moderate weight. Then
\begin{enumerate}
\item[(a)] the algebra $\mathcal{A}_w$ is continuously embedded in $B(\LPN) $. 
\item[(b)] for every $\mathcal{M} \in \mathcal{A}_w, \f \in \LPN $ and $\g \in \LPQ$, $ \langle \mathcal{M}(\f), \g \rangle = \langle \f, \mathcal{M}^*(\g) \rangle $.
Moreover, the operator $\mathcal{M}$ is continuous in $\sigma(\LPN, \LPQ )$-topology.
\end{enumerate}
\end{prop}
\begin{proof}
The proof of the proposition follows by a straight forward calculation.
\end{proof}
\subsection{Spectral invariance}
In order to identify the class $\mathcal{A}_w \subset B(L^p(\BR^d,\BH))$ with $AP^p_w(\rho)$ the class of $\rho$-almost periodic elements having $w$-summable Fourier series. We need the following definitions and necessary theory and apply Theorem 3.2 of \cite{bal4}.

Let $1 \leq p \leq \infty, y \in \BR^d$ and $\mathcal{M} \in B(L^p(\BR^d,\BH))$. Define $\rho(y)\mathcal{M}:=M_y\mathcal{M}M_{-y}$. Then, \[ \rho(y)\mathcal{M}\f(x)=e^{2 \pi i \langle y,x \rangle} \mathcal{M}(\g(x)), \; \mathrm{where} \;\; \g(x)=e^{- 2 \pi i \langle y,x \rangle}\f(x). \]
Clearly $\rho$ is a representation of $\BR^d$ on the Banach space $B(L^p(\BR^d,\BH))$. For each $y \in \BR^d$, $\rho(y)$ is an algebra automorphism and an isometry.
\begin{defn}
A continuous map $Y:\BR^d \rightarrow B(L^p(\BR^d,\BH))$ is \textit{almost-periodic in the sense of Bohr} if for every $\varepsilon >0$ there is a compact set $K=K_{\varepsilon}\subset \BR^d$ such that for all $x \in \BR^d$ \[ (x+K)\cap\{ y \in \BR^d:\Vert Y(g+y)-Y(g)\Vert<\varepsilon,\; \forall g \in \BR^d \}\neq \emptyset \]
\end{defn}
Then $Y$ extends uniquely to a continuous map of the Bohr compactification $\hat{R}^d_c$ of $\BR^d$, denoted by $Y$. Thus $Y:\hat{R}^d_c \rightarrow B(L^p(\BR^d,\BH)) $, where $\hat{R}^d_c$ represents the
topological dual group (i.e. the group of characters) of $\BR^d$ when $\BR^d$ is endowed with the discrete topology. The normalized Haar measure on $\hat{R}^d_c$ is denoted by $\overline{\mu}(dy).$

For each $\mathcal{M} \in B(L^p(\BR^d,\BH))$, we consider the map $\widehat{\mathcal{M}}:\BR^d \rightarrow B(L^p(\BR^d,\BH))$ defined by
\begin{equation}\label{eq24}
 \widehat{\mathcal{M}}(y):=\rho(y)\mathcal{M}= M_y\mathcal{M}M_{-y}.
\end{equation}
If the map $\widehat{\mathcal{M}}$ is continuous and almost-periodic in the sense of Bohr then the operator $\mathcal{M}\in B(L^p(\BR^d,\BH))$ is called $\rho$-\textit{almost periodic}. For every $\rho$-almost periodic operator $\mathcal{M}$, the function $\widehat{\mathcal{M}}$ admits a $B(L^p(\BR^d,\BH))$-valued Fourier series,
\begin{equation}\label{eq25}
\widehat{\mathcal{M}}(y)\sim \sum_{x \in \BR^d}e^{2 \pi i \langle y,x \rangle} C_x(\mathcal{M}),\;\;\;(y \in \BR^d).
\end{equation}
The coefficients $C_x(\mathcal{M}) \in B(L^p(\BR^d,\BH))$ in (\ref{eq25}) are uniquely determined by $\mathcal{M}$ via
\begin{equation}\label{eq26}
C_x(\mathcal{M})=\int_{\hat{R}^d_c} \widehat{\mathcal{M}}(y) e^{-2 \pi i \langle y,x \rangle}\overline{\mu}(dy)=\lim_{T \rightarrow \infty}\frac{1}{(2T)^d}\int_{[-T,T]^d}\widehat{\mathcal{M}}(y) e^{-2 \pi i \langle y,x \rangle}dy
\end{equation}
and satisfy
\begin{equation}\label{eq27}
\rho(y)C_x(\mathcal{M})=e^{2 \pi i \langle y,x \rangle} C_x(\mathcal{M}).
\end{equation}
Therefore, they are eigenvectors of $\rho$ (see \cite{bal4} for details).

Within the class of $\rho$-almost periodic operators consider $AP^p_w(\rho)$, the subclass of those operators for which the Fourier series in (\ref{eq25}) is $w$-summable, where $w$ is an admissible weight. More precisely, a $\rho$-almost periodic operator $\mathcal{M}$ belongs to $AP^p_w(\rho)$ if its Fourier coefficients with respect to $\rho$ satisfy
\begin{equation}\label{eq28}
\Vert \mathcal{M} \Vert_{AP^p_w(\rho)}:=\sum_{x \in \BR^d} \Vert C_x(\mathcal{M}) \Vert_{B(L^p(\BR^d,\BH))}w(x) < \infty.
\end{equation}
Since $w$ is submultiplicative, for  $\mathcal{M}\in AP^p_w(\rho)$ the series
\begin{equation}\label{eq29}
\widehat{\mathcal{M}}(y)= \sum_{x \in \BR^d}e^{2 \pi i \langle y,x \rangle} C_x(\mathcal{M}),\;\;\;y \in \BR^d,
\end{equation} converges absolutely to $\widehat{\mathcal{M}}(y)$ on $B(L^p(\BR^d,\BH))$,
where each $C_x \in B(L^p(\BR^d,\BH)) $ satisfies (\ref{eq26}) and hence (\ref{eq27}). In particular, for $y = 0$, each $\mathcal{M} \in AP^p_w(\rho)$ can be written as
\begin{equation}\label{eq30}
\mathcal{M}= \sum_{x \in \BR^d} C_x(\mathcal{M}).
\end{equation}
Conversely, if $\mathcal{M}$ is given by (\ref{eq30}) with the coefficients $C_x$ satisfying (\ref{eq28}) and (\ref{eq27}), it follows from the theory of almost-periodic series that $\mathcal{M} \in AP^p_w(\rho)$ and $C_x$ satisfy (\ref{eq26}). Now we are in a position to establish connection between $\mathcal{A}_w$ and $AP^p_w(\rho)$ and prove spectral invariance result for $\mathcal{A}_w$. For that we first characterize the eigenvectors $C_x$ of the representation $\rho$.
\begin{lem}
For any $1 \leq p \leq \infty$ and any $\m \in L^{\infty}(\BR^d,B(\BH))$ and $x \in \BR^d$, $C_x=\m(T_x)$ is an eigenvector of $\rho: \BR^d \rightarrow B(B(L^p(\BR^d,\BH))) $. For $1 \leq p < \infty$ these are the only eigenvectors.
\end{lem}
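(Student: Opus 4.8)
I would prove the two implications separately. For the forward direction, fix $\m\in L^{\infty}(\BR^d,B(\BH))$ and $x\in\BR^d$ and set $C_x=\m(T_x)$. For $y\in\BR^d$, the operator $\m(T_x)M_{-y}$ sends $\f$ to $\m\cdot\bigl(T_xM_{-y}\f\bigr)=\m(T_xM_{-y})\f$, so
\[
\rho(y)C_x=M_y\,\m(T_x)\,M_{-y}=M_y\,\m(T_xM_{-y}).
\]
Applying Lemma~\ref{lem6} with $w=y$ gives $M_y\,\m(T_xM_{-y})=e^{2\pi i\langle y,x\rangle}\,\m(T_x)$, hence $\rho(y)C_x=e^{2\pi i\langle y,x\rangle}C_x$ for all $y$, which is precisely the eigenvector relation (\ref{eq27}).

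For the converse, let $C\in B(L^p(\BR^d,\BH))$ satisfy $M_yCM_{-y}=e^{2\pi i\langle y,x\rangle}C$ for all $y$ and some fixed $x$. The key reduction is to set $B:=CT_{-x}$. Using the intertwining relation $T_aM_b=e^{-2\pi i\langle b,a\rangle}M_bT_a$ (a one-line computation from the definitions of $T_a$ and $M_b$), one gets $T_{-x}M_{-y}=e^{-2\pi i\langle y,x\rangle}M_{-y}T_{-x}$, whence
\[
M_yBM_{-y}=e^{-2\pi i\langle y,x\rangle}\bigl(M_yCM_{-y}\bigr)T_{-x}=e^{-2\pi i\langle y,x\rangle}e^{2\pi i\langle y,x\rangle}CT_{-x}=B.
\]
Thus $B$ commutes with every modulation $M_y$. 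Since $C=BT_x$, it now suffices to show that $B$ is a multiplication operator $T_{\m}$ with $\m\in L^{\infty}(\BR^d,B(\BH))$, for then $C=T_{\m}T_x=\m(T_x)$.

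The heart of the matter, and the only place the hypothesis $p<\infty$ is used, is to show that an operator commuting with all modulations is a multiplication operator. I would first upgrade commutation with the $M_y$ to commutation with every scalar multiplication $M_{\phi}$, $\phi\in L^{\infty}(\BR^d)$: finite linear combinations of the $M_y$ are multiplications by trigonometric polynomials, and any $\phi\in L^{\infty}$ is an a.e.\ limit of trigonometric polynomials $P_n$ with $\sup_n\|P_n\|_{\infty}<\infty$; because $p<\infty$, dominated convergence yields $M_{P_n}\to M_{\phi}$ in the strong operator topology, and passing to the limit in $BM_{P_n}=M_{P_n}B$ gives $BM_{\phi}=M_{\phi}B$. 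In particular $B$ commutes with each $M_{\chi_E}$, so it is local: $B(\chi_E\f)=\chi_E\,B\f$. Using locality together with the separability of $\BH$, I would disintegrate $B$: on each cube $Q$ set $\m(t)v:=\bigl(B(\chi_Q\otimes v)\bigr)(t)$ for $v$ in a countable dense subset of $\BH$, check via commutation with $M_{\phi}$ that $\bigl(B(\phi\chi_Q)\bigr)(t)=\phi(t)\m(t)$, and verify $\operatorname*{ess\,sup}_{t}\|\m(t)\|_{B(\BH)}\le\|B\|$; the cubes glue consistently by locality. This produces $\m\in L^{\infty}(\BR^d,B(\BH))$ with $B=T_{\m}$ on the dense subspace of bounded compactly supported functions, hence everywhere.

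I expect the disintegration step---producing a genuinely measurable operator-valued symbol $\m(\cdot)$ and controlling its essential supremum---to be the main obstacle, since it rests on separability of $\BH$ and an a.e.-defined-family argument rather than a formal manipulation. By contrast, the forward implication is an immediate consequence of Lemma~\ref{lem6}, the reduction $B=CT_{-x}$ is pure commutation bookkeeping, and the strong-operator density of trigonometric polynomials (which is exactly what forces $p<\infty$, and hence why uniqueness can fail at $p=\infty$) is standard.
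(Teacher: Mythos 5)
Your proof is correct and follows the paper's approach: the forward implication is the same application of Lemma \ref{lem6}, and the reduction to $B=C_xT_{-x}$, shown via the commutation relation $T_{-x}M_{-y}=e^{-2\pi i\langle y,x\rangle}M_{-y}T_{-x}$ to commute with every modulation, is exactly the paper's computation. Where you go beyond the paper is the final step: the paper simply asserts ``hence $C_x(T_{-x})$ must be a multiplication operator,'' whereas you actually prove this commutant statement (passage from the exponentials to all of $L^{\infty}$ multipliers, locality, and disintegration of $B$ into an operator-valued symbol using separability of $\BH$), and this is precisely where the hypothesis $p<\infty$ and the failure of uniqueness at $p=\infty$ genuinely enter, so your write-up is more complete than the paper's. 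One small caution: your claim that every $\phi\in L^{\infty}(\BR^d)$ is a boundedly dominated a.e.\ limit of trigonometric polynomials with real frequencies is itself not a one-liner on $\BR^d$ (it needs, e.g., Fej\'er means of large-period periodizations combined with a diagonal/Borel--Cantelli selection); an alternative that sidesteps it entirely is to use that trigonometric polynomials are weak-$*$ dense in $L^{\infty}(\BR^d)$ (their annihilator in $L^1$ has vanishing Fourier transform) and that for $p<\infty$ a bounded weak-$*$ convergent sequence of symbols $P_n\to\phi$ gives $M_{P_n}\f\to M_{\phi}\f$ weakly in $L^p(\BR^d,\BH)$, which already suffices to pass to the limit in $BM_{P_n}=M_{P_n}B$.
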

\begin{proof}
If $C_x=\m(T_x)$, then by (\ref{eq31}), it satisfies (\ref{eq27}).\\
For $1\leq p<\infty$, take $C_x \in B(L^p(\BR^d,\BH))$ satisfying (\ref{eq27}). Further the relation (\ref{eq31}) implies $\rho(y)C_x(T_{-x})=M_y C_x M_{-y}(T_{-x})=C_x(T_{-x}).$
Therefore $C_x (T_{-x}M_y)=M_yC_x(T_{-x})$, which in turn imply that $C_x (T_{-x})$ must be a multiplication operator $m$. So $C_x =\ m(T_x).$
\end{proof}
For $p = \infty$ there are eigenvectors of $\rho$ which may not of the form $\m(T_x).$ An example of such an eigenvector is given in (\cite{kur}, Section 5.1.11) for the case $\BH=\BC$. Hence $AP^p_w(\rho)$ consists of all the operators $\mathcal{M}=\sum\limits_{x \in \BR^d} C_x$,
with $C_x$ satisfying (\ref{eq28}) and (\ref{eq27}). The previous lemma says that for $1 \leq p < \infty$ an operator $C_x$ satisfies (\ref{eq27}) if and only if it is of the form $C_x = \m(T_x)$, for some
function $\m \in L^{\infty}(\BR^d,B(\BH))$. Note that $\Vert C_x \Vert_{B(L^p(\BR^d,\BH))}=\Vert \m \Vert_{L^{\infty}(\BR^d,B(\BH))}$ and thus $$\|\mathcal{M}\|_{\mathcal{A}_w}=\displaystyle\sum_{x \in \BR^d} \Vert \m_x \Vert_{L^{\infty}(\BR^d,B(\BH))}w(x)=\sum_{x \in \BR^d} \Vert C_x(\mathcal{M}) \Vert_{B(L^p(\BR^d,\BH))}w(x)=\Vert \mathcal{M} \Vert_{AP^p_w(\rho)}.$$ This gives the identification of $\mathcal{A}_w$ with $AP^p_w(\rho).$
\begin{prop}\label{pr2}
For $p \in [1, \infty)$ the class $\mathcal{A}_w \subset B(L^p(\BR^d,\BH))$ coincides with $AP^p_w(\rho)$.
\end{prop}
For $p = \infty$, the two classes are different. Now we are in a position to prove that the algebra $\mathcal{A}_w$ is spectral with in the class of bounded operators on $L^p(\BR^d,\BH)$. This means if an operator from $\mathcal{A}_w$ is invertible on $L^p(\BR^d,\BH)$ then the inverse operator necessarily belongs to $\mathcal{A}_w$. In other words invertibility in the bigger algebra implies the invertibility in the smaller algebra. To prove these kind of results one makes use of Wiener's $1/f$ lemma or its several versions. We resort to recent Wiener type result on non-commutative almost periodic Fourier series (\cite{bal4}, Theorem 3.2) to obtain the following theorem.

\begin{thm}\label{th8}
Let $w$ be an admissible weight and  $\mathcal{M}=\sum\limits_{x \in \BR^d}\m_x(T_x)\in \mathcal{A}_w$ be an invertible operator on $B(L^p(\BR^d,\BH))$ for some $p\in[1,\infty]$ then $\mathcal{M}^{-1} \in \mathcal{A}_w.$
\end{thm}
\begin{proof}
For $1 \leq p < \infty $ the result follows from Proposition \ref{pr2} and Theorem 3.2 in \cite{bal4}.

For $p=\infty$, take \[ \mathcal{M}=\sum_{x \in \BR^d}\m_x(T_x) \in \mathcal{A}_w \subset B(L^{\infty}(\BR^d,\BH))\] with $\sum\limits_{x \in \BR^d} \Vert \m_x \Vert_{L^{\infty}(\BR^d,B(\BH))}w(x)< \infty$. Define \[ \mathcal{N}=\sum_{x \in \BR^d}(T_x(\m_{-x}))(T_x)=\sum_{x \in \BR^d}\m_{-x}(\cdot -x)(T_x) \in \mathcal{A}_w \subset B(L^{1}(\BR^d,\BH)). \] Since $\Vert T_x(\m_{-x}) \Vert_{L^{\infty}(\BR^d,B(\BH))}=\Vert \m_{-x} \Vert_{L^{\infty}(\BR^d,B(\BH))}$ $\mathcal{N}$ is well defined and a straight forward calculation shows that $\mathcal{M}$ is the transpose (Banach adjoint) of $\mathcal{N}$. Therefore $\mathcal{N}$ is invertible when $\mathcal{M}$ is invertible. Since $\mathcal{A}_w$ is spectral in $B(L^1(\BR^d,\BH))$ we get $\mathcal{M}^{-1}=(\mathcal{N}^{-1})' \in \mathcal{A}_w$. That means $\mathcal{M}^{-1}=\sum\limits_{x \in \BR^d}{\bf n}_x(T_x)$ for some ${\bf n}_x \in L^{\infty}(\BR^d,B(\BH)) $ such that $\sum\limits_{x \in \BR^d} \Vert {\bf n}_x \Vert_{L^{\infty}(\BR^d,B(\BH))}w(x)< \infty$.
\end{proof}

\section{$\BH$-valued Dual Gabor frames on amalgam spaces}

\subsection{Invertibility of the frame operators}
\begin{thm}\label{th3}
Let $w$ be an admissible weight, $v$ be $w$-moderate weight and $\g \in \L1$. Suppose that the Gabor system $\mathcal{G}(\g, \alpha, \beta)=\{M_{\beta n}T_{\alpha k} \g : \; k,n \in \BZ^d \}$ is a frame for $L^2(\BR^d,\BH)$ with frame operator $S_{\g}$.
Then inverse operator $S^{-1}_{\g}: \LPN \rightarrow \LPN$, $1 \leq p, q \leq \infty$ is continuous both in $\sigma(\LPN, \LPQ)$ and the norm topologies.
\end{thm}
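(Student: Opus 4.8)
The plan is to exhibit the frame operator $S_{\g}$ as an element of the Banach $*$-algebra $\mathcal{A}_w$ of $\BH$-valued $L^\infty$-weighted shifts, apply the spectral invariance of Theorem \ref{th8} to locate its inverse inside $\mathcal{A}_w$, and then read off the two continuity statements from the mapping properties of $\mathcal{A}_w$ recorded in Proposition \ref{pr1}.

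First I would verify that $S_{\g}=S_{\g,\g}\in\mathcal{A}_w$. Setting $\ga=\g$ in the Walnut representation of Theorem \ref{th1}(c) gives
\[ S_{\g}=\beta^{-d}\sum_{n\in\BZ^d}G_n\,(T_{n/\beta}),\qquad G_n(x)=\sum_{k\in\BZ^d}T_{\alpha k}\g(x)\odot T_{\alpha k+n/\beta}\g(x)\in L^\infty(\BR^d,B(\BH)). \]
This is precisely a weighted-shift expansion $\sum_x\m_x(T_x)$ whose support is the countable set $\{n/\beta:n\in\BZ^d\}$ and whose coefficients are $\m_{n/\beta}=\beta^{-d}G_n$. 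Lemma \ref{lem4} then supplies exactly the summability condition defining $\mathcal{A}_w$, namely
\[ \|S_{\g}\|_{\mathcal{A}_w}=\beta^{-d}\sum_{n\in\BZ^d}\|G_n\|_{L^\infty(\BR^d,B(\BH))}\,w(n/\beta)\leq C\beta^{-d}\|\g\|_{\L1}^2<\infty, \]
so $S_{\g}\in\mathcal{A}_w$.

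Next, since $\mathcal{G}(\g,\alpha,\beta)$ is a frame for $L^2(\BR^d,\BH)$, Theorem \ref{th7}(c) tells us that $S_{\g}$ is strictly positive and invertible on $L^2(\BR^d,\BH)$. I would then invoke Theorem \ref{th8} with $p=2$: since $S_{\g}\in\mathcal{A}_w$ is invertible in $B(L^2(\BR^d,\BH))$, its inverse $S_{\g}^{-1}$ again lies in $\mathcal{A}_w$, say $S_{\g}^{-1}=\sum_x{\bf n}_x(T_x)$ with $\sum_x\|{\bf n}_x\|_{L^\infty(\BR^d,B(\BH))}w(x)<\infty$. By Proposition \ref{pr1}(a) this element defines a bounded operator on $\LPN$ for every $1\leq p,q\leq\infty$, with $\|S_{\g}^{-1}\|_{B(\LPN)}\leq C_v\|S_{\g}^{-1}\|_{\mathcal{A}_w}$. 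Because the embedding $\mathcal{A}_w\hookrightarrow B(\LPN)$ is a continuous algebra homomorphism, the relation $S_{\g}S_{\g}^{-1}=S_{\g}^{-1}S_{\g}=I$ holding in $\mathcal{A}_w$ persists on $\LPN$, so the operator furnished above really is the inverse of $S_{\g}$ on $\LPN$; this gives norm continuity of $S_{\g}^{-1}$. Continuity in the $\sigma(\LPN,\LPQ)$-topology is then immediate from Proposition \ref{pr1}(c).

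I expect the only genuine obstacle to be the first step: confirming that the Walnut coefficients $\{G_n\}$ are $w$-weighted $\ell^1$-summable in $L^\infty(\BR^d,B(\BH))$ so that $S_{\g}$ is a bona fide member of $\mathcal{A}_w$, and then checking that the algebraic inverse produced by spectral invariance coincides with the operator inverse across the whole amalgam scale. The first of these is exactly Lemma \ref{lem4}, and the second follows from the homomorphism property in Proposition \ref{pr1}; once these are in hand the theorem follows by assembling the pieces.
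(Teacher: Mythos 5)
Your proposal is correct and follows essentially the same route as the paper's own proof: the Walnut representation of Theorem \ref{th1} together with the summability estimate of Lemma \ref{lem4} places $S_{\g}$ in $\mathcal{A}_w$, the spectral invariance of Theorem \ref{th8} (using invertibility on $L^2(\BR^d,\BH)$ from Theorem \ref{th7}) yields $S_{\g}^{-1}\in\mathcal{A}_w$, and Proposition \ref{pr1} then gives boundedness on $\LPN$ and continuity in the $\sigma(\LPN,\LPQ)$-topology. Your added check that the algebraic inverse in $\mathcal{A}_w$ coincides with the operator inverse on the full amalgam scale is a detail the paper leaves implicit, but it does not alter the argument.
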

\begin{proof}
As a consequence of the Walnut's representation in Theorem \ref{th1} the frame operator $S_{\g}$ belongs to the algebra $\mathcal{A}_w.$ As $S_{\g}$ is invertible in $L^2(\BR^d, \BH),$ Theorem \ref{th8} implies that $S^{-1}_{\g} \in \mathcal{A}_w.$ Since $\mathcal{A}_w$ is continuously embedded in $B(\LPN)$, by Proposition \ref{pr1} $S^{-1}_{\g} \in B(\LPN)$. Hence the theorem follows.
\end{proof}

Let $C_0(\BR^d,B(\BH))$ be the subspace formed by the functions of $L^\infty(\BR^d,B(\BH))$ that are continuous. The next corollary shows the continuity of the dual generator provided the window function is continuous.

\begin{cor}
If the window function $g$ is continuous then under the assumptions of Theorem \ref{th3} the dual window $\tilde{\g}=S^{-1}_{\g}(\g)$ is also continuous.
\end{cor}
\begin{proof}
Let
\[\tilde{\mathcal{A}_w}= \Big\{ \mathcal{M}=(\m_x)_{x \in \BR^d}\in \ell^1_w(\BR^d,C_0(\BR^d,B(\BH)))\;| \sum_{x \in \BR^d} \Vert \m_x \Vert_{C_0(\BR^d,B(\BH))}w(x)< \infty \Big\}. \]
Then $\tilde{\mathcal{A}_w} \subset \mathcal{A}_w \subset B(L^p(\BR^d,\BH))$. If $\g \in W_{\BH}(C_0,L^1_w)$ then $S_{\g}=\beta^{-d}\sum\limits_{n \in \BZ^d}G_n(T_{\frac{n}{\beta}}) \in \tilde{\mathcal{A}_w}$. Since $S_{\g}$ is invertible in $B(L^2(\BR^d,\BH))$, applying Theorem \ref{th8} on $\tilde{\mathcal{A}_w}$ we get $S_{\g}^{-1} \in \tilde{\mathcal{A}_w}$.

Let $\g$ be continuous. To show $S_{\g}^{-1}(\g)$ is continuous it is enough to show $S_{\g}$ maps $W_{\BH}(C_0,L^1_w)$ to $W_{\BH}(C_0,L^1_w).$ Let $\f \in W_{\BH}(C_0,L^1_w)$. Since $G_n(x)\left( T_{\frac{n}{\beta}} \f(x) \right)$ is continuous for each $n$, $\sum\limits_{\mathrm{finite}} G_n(x)\left( T_{\frac{n}{\beta}} \f(x) \right)$ is continuous. Again,
\begin{eqnarray*}
\left\Vert \sum\limits_{n \in \BZ^d} G_n(x)\left( T_{\frac{n}{\beta}} \f(x) \right) \right\Vert_{\BH} & \leq & \sum\limits_{n \in \BZ^d} \Vert G_n \Vert_{L^{\infty}(\BR^d,B(\BH))} \Vert T_{\frac{n}{\beta}} \f(x) \Vert_{\BH}\\
&\leq & \sum\limits_{n \in \BZ^d} \Vert G_n \Vert_{L^{\infty}(\BR^d,B(\BH))} \Vert \f \Vert_{\L1} < \infty.
\end{eqnarray*}
So by Weierstrass M-test $\sum\limits_{n \in \BZ^d} G_n(x)\left( T_{\frac{n}{\beta}} \f(x) \right)$ converges uniformly and hence $S_{\g}\f(x)=\beta^{-d}\sum\limits_{n \in \BZ^d} G_n(x)\left( T_{\frac{n}{\beta}} \f(x) \right)$ is continuous.
\end{proof}
\begin{rmk}\label{r1}
(i) By Proposition \ref{pr3} for all $\f \in \LPN$
\begin{equation}\label{eq34}
S_{\g}(\f)=\lim_{K,N \rightarrow \infty}\sum_{\Vert k \Vert_{\infty} \leq K} \sum_{\Vert n \Vert_{\infty} \leq N} \langle \f,M_{\beta n}T_{\alpha k}\g \rangle M_{\beta n}T_{\alpha k}\g ,
\end{equation}
with convergence in the $\sigma(\LPN,\LPQ)$-topology and for $p,q < \infty$ in the norm of $\LPN$. Since $S_{\g}^{-1}\in \mathcal{A}_w$, using Proposition \ref{pr1}(c) and applying $S_{\g}^{-1}$ to both sides of (\ref{eq34}) and we obtain
\begin{eqnarray*}
\f &=& \lim_{K,N \rightarrow \infty}\sum_{\Vert k \Vert_{\infty} \leq K} \sum_{\Vert n \Vert_{\infty} \leq N} \langle \f,M_{\beta n}T_{\alpha k}\g \rangle M_{\beta n}T_{\alpha k} \tilde{\g} \\
&=&  \lim_{K,N \rightarrow \infty}\sum_{\Vert k \Vert_{\infty} \leq K} \sum_{\Vert n \Vert_{\infty} \leq N} \langle \f,M_{\beta n}T_{\alpha k}\tilde{\g} \rangle M_{\beta n}T_{\alpha k}\g. \end{eqnarray*}
 Similarly using Proposition \ref{pr3} we get the convergence in the norm of $\LPN$.

(ii) If $\mathcal{G}(\g,\alpha, \beta)$ is a frame for $L^2(\BR^d,\BH)$ with dual window $\tilde{\g}=S^{-1}_{\g}(\g) \in L^2(\BR^d,\BH)$, then inverse frame operator is given by
\[S^{-1}_{\g,\g}\f=S_{\tilde{\g},\tilde{\g}}\f=\sum_{k \in \BZ^d}\sum_{n \in \BZ^d} \langle \f,T_{\alpha k}M_{\beta n} \tilde{\g} \rangle T_{\alpha k}M_{\beta n}\tilde{\g}. \]

(iii) If $\mathbb{H}=\mathbb{C}$ then Theorem \ref{th3} coincides with Theorem 3.2 of \cite{kri} and Theorem 2 of \cite{weis}. Again if $\mathbb{H}=\mathbb{C}^n$ the invertibility of Gabor superframes on vector valued amalgam spaces is obtained. If we take $\mathbb{H}_1=\mathbb{H}_2\cdots=\mathbb{H}_r=\mathbb{C}$ and $\f(x)=f(x)\bigoplus f(x)\bigoplus\cdots\bigoplus f(x)$ (r-times tensor product of $f(x)$ with itself), where $f\in W(L^p,L_w^q)$ as in Remark \ref{r} (iv) we obtain the invertibility of multi-window Gabor frames on amalgam spaces (see Theorem 6 of \cite{bal3}).

(iv) Since the frame operator $S_{\g}\in\mathcal{A}_w$ and $S_{\g}$ is invertible, by the last line of the proof of Theorem \ref{th8}, $S_{\g}^{-1}$ (as an operator on $L^2(\BR^d,\BH)$), can be expressed as $S_{\g}^{-1}\f(x)=\sum_{k\in\mathbb{Z}^d}G_n(x)(f(x-x_k))$ where the family of points $\{x_k\}$ may not lie in the lattice $\Lambda=\prod_{i=1}^r\alpha_i\mathbb{Z}^d\times\beta_i\mathbb{Z}^d$.

\end{rmk}
\section*{Acknowledgments}
The first author wishes to thank the Ministry of Human Resource Development, India for the  research fellowship and Indian Institute of Technology Guwahati, India for the support provided during the period of this work. The authors would like to thank the referee for many very helpful comments and suggestions that helped us improve the presentation of this paper.



\begin{thebibliography}{99}
\bibitem{bal1} {\sc R. Balan}, Multiplexing of signals using superframes, In {\em SPIE Wavelets Applications}, Vol. {\bf
4119} of {\em Signal and Image Process. VIII}, (2000), 118--129.
\bibitem{bal2} {\sc R. Balan}, Density and redundancy of the noncoherent Weyl-Heisenberg superframes, In {\em The functional and harmonic analysis of wavelets and frames (San Antonio, TX, 1999)}, Vol.
{\bf 247} of {\em Contemp. Math.}, pages 29–-41, Amer. Math. Soc., Providence, RI, 1999.
\bibitem{bal4} {\sc R. Balan} and {\sc I. Krishtal}, An almost periodic noncommutative Wiener's Lemma, {\em J. Math. Anal. Appl.}, {\bf 370}(2), 2010, 339--349.
\bibitem{bal3} {\sc R. Balan}, {\sc J. Christensen}, {\sc I. Krishtal}, {\sc K. Okoudjou}, and {\sc J. L. Romero}, Multi-window Gabor frames in amalgam spaces, {\em Math. Res. Lett.}, Vol. {\bf 21}, No.{\bf 1} (2014), 55--69.
\bibitem{bas1} {\sc A.G. Baskakov}, Asymptotic estimates for elements of matrices of inverse operators, and harmonic analysis, (Russian) {\em Sibirsk. Mat. Zh.} {\bf 38} (1), 1997, 14-–28 (translation in {\em Siberian Math. J.} {\bf 38} (1), 1997, 10-–22).

\bibitem{bas2} {\sc A.G. Baskakov}, Estimates for the elements of inverse matrices, and the spectral analysis of linear operators, (Russian) {\em Izv. Ross. Akad. Nauk Ser. Mat.} {\bf 61} (6), 1997, 3-–26 (translation in {\em Izv Math.} {\bf 61} (6), 1997, 1113-–1135).
\bibitem{ben} {\sc C. Bennett} and {\sc R. Sharpley}, Interpolation of Operators, Academic Press, Boston, 1988.
\bibitem{bol} {\sc H. B\"olcskei}, A necessary and sufficient condition for dual Weyl-Heisenberg frames to be compactly supported, {\em J. Fourier Anal. Appl.}, {\bf 5} (1999), 409--419.

\bibitem{pre} {\sc V. Del Prete}, Estimates, decay properties, and computation of the dual function for Gabor frames, {\em J. Fourier Anal. Appl.}, {\bf 5} (1999), 545--562.
\bibitem{die} {\sc J. Diestel} and {\sc J. J. Uhl, Jr.}, Vector measures, {\em Math. Surveys 15}, Amer. Math. Soc., Providence, 1977.

\bibitem{dut1} {\sc D. E. Dutkay}, The local trace function for super-wavelets, Frames and operator Theory, {\em Contem. Math.}, {\bf 345},Amer. Math. Soc., Providence, RI, 115--136, 2004.

\bibitem{dut2} {\sc D. E. Dutkay} and {\sc p. Jorgensen}, Oversampling generatessuper-wavelets, {\em Proc. Amer. Math. Soc.}, {\bf 135}, 2219--2227, 2007.


\bibitem{fei1} {\sc H. G. Feichtinger}, Wiener amalgams over Euclidean spaces and some of their applications, In {\em Function spaces (Edwardsville, IL, 1990)}, 123--137, {\em Lecture Notes in Pure and Appl. Math.}, {\bf 136}, Dekker, New York, NY, 1992.
\bibitem{fei2} {\sc H. G. Feichtinger}, New results on regular and irregular sampling based on Wiener amalgams, In {\em Function spaces (Edwardsville, IL, 1990)}, 107--121, {\em Lecture Notes in Pure and Appl. Math.}, {\bf 136}, Dekker, New York, NY, 1992.
\bibitem{fei3} {\sc H. G. Feichtinger}, Amalgam spaces and generalized harmonic analysis, In {\em Proceedings of the Norbert Wiener Centenary Congress, 1994 (East Lansing, MI, 1994)}, 141-150, {\em Proc. Sympos. Appl. Math.}, {\bf 52}, Amer. Math. Soc., Providence, RI, 1997.
\bibitem{fei} {\sc H.G.Feichtinger } and {\sc F. Weisz}, Gabor Analysis on Wiener amalgams,  {\em Sampl. Theory Signal Image Process}, Vol.{\bf 6}, No.{\bf 2} (2007),129--150.
\bibitem{str} {\sc H.G.Feichtinger } and {\sc T. Strohmer} (Eds.), Gabor Analysis and Algorithms: Theory and Applications, Birkh\"auser, Boston, 1998.
\bibitem{zim} {\sc H.G.Feichtinger } and {\sc G. Zimmermann}, A Banach space of test functions for Gabor analysis, in \cite{str}, 123--170.
\bibitem{fou} {\sc J.J.F. Fournier} and {\sc J. Stewart}, Amalgams of $L^p$ and $l^q$, {\em Bull. Amer. Math. Soc.}, Vol. {\bf 13}, No. {\bf 1}, (1985), 1--21.

\bibitem{lak} {\sc J. E. Gilbert} and {\sc J. D. Lakey}, On the characterization of the local Hardy space by Gabor frames, in: Wavelets, Frames and Operator Theory ({\sc C. Heil}, {\sc P. E. T. Jorgensen} and {\sc D. R. Larson}, Eds.), {\em Contemp. Math.}, Vol. {\bf 345}, Amer. Math. Soc. (Providence, RI, 2004), 153-–161.
\bibitem{gra} {\sc L. Grafakos} and {\sc C. Lennard}, Characterization of $L^p(\BR^n)$ using Gabor frames, {\em J. Fourier Anal. Appl.}, {\bf 7} (2), 2001, 101--126.
\bibitem{gro}  {\sc K. Gr\"ochenig}, Foundations of Time-Frequency Analysis, Birkh\"auser, Boston, 2001.
\bibitem{gro2}  {\sc K. Gr\"ochenig}, Localization of frames, Banach frames, and the invertibility of the frame operator, {\em J. Fourier Anal. Appl.}, {\bf 10} (2), 2004, 105--132.
  \bibitem{hei} {\sc K. Gr\"ochenig} and {\sc C.Heil}, Gabor meets Littlewood-Paley: Gabor expansions in $L^p({\BR}^d)$, {\em Studia Math.} {\bf 146} (2001), 15--33.
  \bibitem{oko} {\sc K. Gr\"ochenig}, {\sc C.Heil} and {\sc K. Okoudjou}, Gabor analysis in weighted amalgam spaces, {\em Sampl. Theory Signal Image Process}, {\bf 1} (2002), 225--259.
    \bibitem{g} {\sc K. Gr\"ochenig} and {\sc M. Leinert}, Wienner's lemma for twisted convolution~~and Gabor frames, , {\em J. Amer. Math. Soc.}, {\bf 17(1)} (2003), 1-18.
\bibitem{lyu} {\sc K. Gr\"ochenig} and {\sc Y. Lyubarskii}, Gabor (super)frames with Hermite functions, {\em Math. Ann.}, {\bf 345} (2009), 267-286.
\bibitem{gu} {\sc Q. Gu} and {\sc D. Han}, Super-wavelets and decomposable wavelet frames, {\em J. Fourier Anal. Appl.}, {\bf 11} (2005), 683--696.
\bibitem{han} {\sc D. Han} and {\sc D. R. Larson}, Frames, bases and group representations, {\em Mem. Amer. Math. Soc.} {\bf 147} (697), 2000.
\bibitem{hei1} {\sc C. Heil}, An introduction to weighted Wiener amalgams, {\em Proc. Conf. Madras} (January 2002), M. Krishna, R. Ramakrishnan and S. Thangavelu, eds., Allied Publishers, New Delhi (2003), 183--216.
\bibitem{jan} {\sc A. J. E. M. Janssen}, Duality and biorthogonality for Weyl-Heisenberg frames, {\em J. Fourier Anal. Appl.}, {\bf 1(4)} (1995), 403--436.
\bibitem{kat} {\sc Y. Katznelson}, An Introduction to Harmonic Analysis, Dover, New York, 1976.
\bibitem{kri} {\sc I. Krishtal} and {\sc K. Okoudjou}, Invertibility of the Gabor frame operator on the Wiener amalgam space, {\em J. Approx. Theory}, {\bf 153}(2): 212--224, 2008.
\bibitem{kur} {\sc V. G. Kurbatov}, Functional-differential operators and equations, {\em Mathematics and its Applications}, Vol. {\bf 473}, Kluwer Academic Publishers, Dordrecht, 1999.
 \bibitem{li} {\sc Z. -Y. Li} and {\sc D. Han},Constructing super Gabor frames: the rational time frequency lattice case., {\em Sci. China. Math.}, {\bf 53} (2010), 3179--3186.
  \bibitem{ru} {\sc J. L. R. De Francia,} Vector-valued inequalities for Fourier series., \emph{Proc. Amer. Math. Soc.} 78 (1980), no. 4, 525–528.
\bibitem{str1} {\sc T. Strohmer}, Approximation of dual Gabor frames, window decay and wireless communications, {\em Appl. Comput. Harmon. Anal.}, {\bf 11} (2001), 243--262.
\bibitem{wal}  {\sc D.F.Walnut}, Continuity properties of the Gabor frame operator, {\em J. Math. Anal. Appl.} {\bf 165} (1992), 479--504.
\bibitem{wei} {\sc F. Weisz}, Gabor analysis and Hardy spaces, {\em East J. Approx.}, {\bf 15} (2009), 1--24.
\bibitem{weis} {\sc F. Weisz}, Invertibility of the Gabor frame operator on some function spaces, {\em Acta Math. Hungar.}, {\bf 144(1)} (2014), 167--181.
\bibitem{zyg} {\sc A. Zygmund}, Trigonometric Series, Cambridge University Press, Cambridge, 1959.
\bibitem{zib1} {\sc M. Zibulski} and {\sc Y. Y. Zeevi}, Analysis of multiwindow Gabor-type schemes by frame methods, {\em Appl. Comput. Harmon. Anal.}, {\bf 4} (1997), 188--221.
\bibitem{zib2} {\sc M. Zibulski} and {\sc Y. Y. Zeevi}, Oversampling in the gabor scheme, {\em IEEE Trans. Signal Process.}, {\bf 41} (1993), 2628--1679.
\end{thebibliography}
\end{document}